\newtheorem{theorem}{Theorem}[section]
\newtheorem{proposition}[theorem]{Proposition}
\newtheorem{lemma}[theorem]{Lemma}
\newtheorem{remark}[theorem]{Remark}
\numberwithin{equation}{section}
\title[A  dissipative logarithmic type evolution equation]{A dissipative logarithmic type evolution equation of second order in time}
\author[F. L. Oliveira]{F\'abio L. de Oliveira$^1$}\thanks{$^1$Research partially supported by
CAPES \# 88887.661986/2022-00, Brazil.}
\address[F. L. Oliveira]{Universidade Federal da Para\'{\i}ba, Departamento de Matem\'atica, Cidade Universit\'{a}ria, 58051-900 Jo\~{a}o Pessoa PB, Brazil.}
\email{fabiolimaoliveira99@gmail.com}
\author[D. G. Santos]{Diego G. dos Santos}
\address[D. G. Santos]{Universidade Federal da Para\'{\i}ba, Departamento de Matem\'atica, Cidade Universit\'{a}ria, 58051-900 Jo\~{a}o Pessoa PB, Brazil.}
\email{diego.gomes2@academico.ufpb.br}
\author[M. J. M. Silva]{Maria J. M. Silva}
\address[M. J. M. Silva]{Universidade Federal da Para\'{\i}ba, Departamento de Matem\'atica, Cidade Universit\'{a}ria, 58051-900 Jo\~{a}o Pessoa PB, Brazil.}
\email{mjms@academico.ufpb.br}
\author[D. J. C. Silva]{Dennys J. C. Silva}
\address[D. J. C. Silva]{Departamento de Matem\'atica, Universidade Federal da Para\'iba,
Cidade Universit\'{a}ria-Campus I, 58051-900 Jo\~{a}o Pessoa PB, Brazil.}
\email{dennys.costasilva@gmail.com}
\date{\today}
\begin{document}

 \maketitle

\begin{abstract}
In this paper, we introduce a logarithmic-type second-order model with a non-local logarithmic damping mechanism in $\mathbb{R}^N$. We present a motivation with a spectral approach to consider the equation, we consider the Cauchy problem associated with the model. More precisely, we study the asymptotic behavior of solutions as t goes to infinity in $L^2$-sense; namely, we prove results on the asymptotic profile and optimal decay of solutions as time goes to infinity in $L^2$-sense. 
\end{abstract}

\vskip .1 in \noindent {\it Mathematical Subject Classification 2020:} 35L05, 35B40, 35C20, 35S05.
\newline {\it Key words and phrases:} asymptotic profile; Fourier transform; L2-decay; Logarithmic damping; optimal estimates

\tableofcontents

\section{Introduction}

In this paper, we are particularly interested in the asymptotic profile and optimal decay of solutions  as time goes to infinity, in $L^2-$sense, of the following evolution equation of second order in time,, which naturally arises from the spectral properties of the damped free wave operator in $\mathbb{R}^N$ 
\begin{equation}\label{Pro1}
\partial_t^2u+\dfrac{1}{4}\log^2(-\Delta+I)u+\dfrac{\pi^2}{4}u+\log(-\Delta+I)\partial_tu=0,
\end{equation}
subject to initial conditions
\begin{equation}\label{Pro2}
u(x,0)=u_0(x),\ \partial_tu(x,0)=u_1(x),\ x\in\mathbb{R}^N.
\end{equation}
where $N\geq3$, and $L=\log(-\Delta+I)$ denotes  the unbounded linear operator
\[
L:D(L)\subset L^2(\mathbb{R}^N)\to L^2(\mathbb{R}^N)
\]
defined by
\[
D(L)=\left\{f\in L^2(\mathbb{R}^N); \int_{\mathbb{R}^N}\log^2(1+|\xi|^2)|\mathcal{F}(f)(\xi)|^2d\xi<+\infty \right\}
\]
and for each $f\in D(L)$
\[
(Lf)(x)=\mathcal{F}^{-1}_{\xi\to x}(\log(1+|\xi|^2) \mathcal{F}(f)(\xi))(x),
\]
where $\mathcal{F}_{x\to\xi}(\mathcal(f)(\xi))$ denotes the Fourier transform of $f\in L^2(\mathbb{R}^N)$ at $\xi$, and $\mathcal{F}^{-1}_{x\to\xi}(\mathcal(f)(\xi))(x)$ denotes the inverse Fourier transform of $f$ at $x$.

We also observe that the linear operator $\log(-\Delta+I)$ in \eqref{Pro1}  can be characterized by
\[
\log(-\Delta+I)u(x):=\int_{\mathbb{R}^N}\dfrac{u(x)-u(y)}{|y-x|^N}\omega(|y-x|)dy,
\]
with $\omega(r)=c_Nr^{\frac{N}{2}}K_{\frac{N}{2}}(r)$, $c_N$ is a constant given by
\[
c_N=\dfrac{\Gamma(\frac{N}{2})}{\pi^{\frac{N}{2}}},
\]
where $\Gamma$ denotes the Gamma function,  and $K_{\frac{N}{2}}$ is a  modified Bessel function of second kind and index $\frac{N}{2}$. In particular, $\omega(r)=c_N+b_Nr+o(r)$ as $r\to0^+$, for more details see e.g. \cite{ Feulefack, SiSoVo}.

\subsection{Motivation}

Heuristically, if  we consider the following linear wave equation with potential
\begin{equation}\label{Mot1}
\partial_t^2u-\Delta u+u=0,
\end{equation}
in $\mathbb{R}^N$ subject to initial conditions \eqref{Pro2}, where $N\in\mathbb{N}$, $u_0\in H^1(\mathbb{R}^N)$ and $u_1\in L^2(\mathbb{R}^N)$. Then, it is well known that the  problem \eqref{Mot1}-\eqref{Pro2} can be rewrite as the following Cauchy problem on $H^1(\mathbb{R}^N)\times L^2(\mathbb{R}^N)$
\begin{equation}\label{Mot3}
\dfrac{d}{dt}\begin{bmatrix} u\\ \partial_tu\end{bmatrix}+\begin{bmatrix} 0 & -I\\  -\Delta+I & 0\end{bmatrix}\begin{bmatrix} u\\ \partial_tu\end{bmatrix}=\begin{bmatrix} 0\\ 0\end{bmatrix},\ t>0,
\end{equation}
subject to initial condition
\begin{equation}\label{Mot4}
\begin{bmatrix} u\\ \partial_tu\end{bmatrix}(0)=\begin{bmatrix} u_0\\ u_1\end{bmatrix}.
\end{equation}
The unbounded linear operator $\left[\begin{smallmatrix} 0 & -I\\  -\Delta+I & 0\end{smallmatrix}\right]$  on the Hilbert space $H^1(\mathbb{R}^N)\times L^2(\mathbb{R}^N)$ with domain $H^2(\mathbb{R}^N)\times H^1(\mathbb{R}^N)$ is the infinitesimal generator of a strongly continuous semigroup of bounded linear operators on $H^1(\mathbb{R}^N)\times L^2(\mathbb{R}^N)$, see e.g. \cite[Section 7.4, Chapter 7]{P}.

As mentioned by Chen and Russel in \cite{ChenRussel} `Perhaps the most notable disadvantage associated with conservative systems is that they do not occur in nature. There are always dissipative mechanisms within the system, causing the energy to decrease during any positive time interval'. This has inspired us to consider the dissipative counterpart of \eqref{Mot3}-\eqref{Mot4}.

Motived by the analysis performed in \cite{B1} we can consider the Logarithmic counterpart of \eqref{Mot3}-\eqref{Mot4} given by the dissipative system
\begin{equation}\label{Mot5}
\dfrac{d}{dt}\begin{bmatrix} u\\ \partial_tu\end{bmatrix}+\left(\log\begin{bmatrix} 0 & -I\\  -\Delta+I & 0\end{bmatrix}\right)\begin{bmatrix} u\\ \partial_tu\end{bmatrix}=\begin{bmatrix} 0\\ 0\end{bmatrix},\ t>0,
\end{equation}
subject to initial condition \eqref{Mot4}, where 
\[
\log\begin{bmatrix} 0 & -I\\  -\Delta+I & 0\end{bmatrix}=- \left(-\log\begin{bmatrix} 0 & -I\\  -\Delta+I & 0\end{bmatrix}\right)
\]
 denotes the logarithm of the operator $\left[\begin{smallmatrix} 0 & -I\\  -\Delta+I & 0\end{smallmatrix}\right]$ in the sense of \cite[Page 152]{A}; namely, $-\log\left[\begin{smallmatrix} 0 & -I\\  -\Delta+I & 0\end{smallmatrix}\right]$ denotes the  infinitesimal generator of a strongly continuous semigroup of bounded linear operators $\{\left[\begin{smallmatrix} 0 & -I\\  -\Delta+I & 0\end{smallmatrix}\right]^{-1};t\geqslant0\}$ on $H^1(\mathbb{R}^N)\times L^2(\mathbb{R}^N)$, 
\[
-\log\begin{bmatrix} 0 & -I\\  -\Delta+I & 0\end{bmatrix}=\lim_{\alpha\to0^+}\dfrac{1}{\alpha}\left(\begin{bmatrix} 0 & -I\\  -\Delta+I & 0\end{bmatrix}^{-\alpha}-\begin{bmatrix} I & 0\\ 0 & I\end{bmatrix}\right)
\]
and by the Balakrishnan integral formula
\[
-\log\begin{bmatrix} 0 & -I\\  -\Delta+I & 0\end{bmatrix}=\begin{bmatrix} \frac{1}{2}\log(-\Delta+I ) & -\frac{\pi}{2}A^{-\frac12}\\  \frac{\pi}{2}A^{\frac12} & \frac{1}{2}\log(-\Delta+I )\end{bmatrix}
\]
that is
\[
\log\begin{bmatrix} 0 & -I\\  -\Delta+I & 0\end{bmatrix}=\begin{bmatrix} -\frac{1}{2}\log(-\Delta+I ) & \frac{\pi}{2}A^{-\frac12}\\  -\frac{\pi}{2}A^{\frac12} & -\frac{1}{2}\log(-\Delta+I )\end{bmatrix}
\]
is the unbounded linear operator on the Hilbert space $H^1(\mathbb{R}^N)\times L^2(\mathbb{R}^N)$ with domain 
\[
[D(\log(-\Delta+I )\cap H^{\frac12}(\mathbb{R}^N)]\times D(\log(-\Delta+I ))
\]
present in \eqref{Mot5}, and the equation associated with \eqref{Mot5} to be solved by $u$ can be rewrite as the equation  \eqref{Pro1}, see the ideas in \cite{B1}.

Dissipative logarithmic type evolution equations such as \eqref{Pro1} have been treated in the literature in recent years in different configurations, see e.g. \cite{B1} and \cite{CIP}, see also \cite{MAF3}.  Namely, in \cite{CIP} the dissipative logarithmic type evolution equation
\[
\partial_t^2u+\log(-\Delta+I)u+\log(-\Delta+I)\partial_tu=0
\]
was considered in $\mathbb{R}^N$, where $\Delta$ is the usual Laplace operator defined in $H^2(\mathbb{R}^N)$. The authors consider the Cauchy problem for this new model in $\mathbb{R}^N$, and study the asymptotic profile and optimal decay and/or blow-up rates of solutions as time goes to infinity in $L^2-$sense.

Here, since the linear operator $L$ is non-negative and self-adjoint in $L^2(\mathbb{R}^N)$, the square root
\[
L^{\frac12}:D(L^{\frac12})\subset  L^2(\mathbb{R}^N)\to L^2(\mathbb{R}^N)
\]
can be defined and is also nonnegative and self-adjoint with its domain
\[
D(L^{\frac12})=\left\{f\in L^2(\mathbb{R}^N); \int_{\mathbb{R}^N}\log(1+|\xi|^2)|\mathcal{F}(f)(\xi)|^2d\xi<+\infty \right\}.
\]

Note that $D(L^{\frac12})$ becomes Hilbert space with its graph norm
\[
\|f\|_{D(L^{\frac12})}=\left(\|f\|_{L^2(\mathbb{R}^N)}^2+\|L^{\frac12}f\|_{L^2(\mathbb{R}^N)}^2\right)^{\frac12}
\]
It is easy to check that
\[
H^s(\mathbb{R}^N) \hookrightarrow
 D(L^{\frac12}) \hookrightarrow
 L^2(\mathbb{R}^N)
\]
for $s>0$, see e.g. \cite{CIP}.

Now, for the moment, we choose $ D(L)\times L^2(\mathbb{R}^N)$ endowed with the norm given by
\[
\left\|\begin{bmatrix} u\\ \partial_tu\end{bmatrix}\right\|_{D(L)\times L^2(\mathbb{R}^N)}= \dfrac{1}{4}\|L  u\|_{L^2(\mathbb{R}^N)}^2+\dfrac{\pi^2}{4}\| u\|_{L^2(\mathbb{R}^N)}^2+\|v\|_{L^2(\mathbb{R}^N)}^2
\]
 and the initial data $\left[\begin{smallmatrix} u_0\\  u_1\end{smallmatrix}\right]$ as follows
\[
u_0\in D(L),\ u_1\in L^2(\mathbb{R}^N).
\]
Concerning the existence of a unique solution to problem \eqref{Pro1}-\eqref{Pro2}, by a similar argument to \cite[Proposition 2.1]{RIke} (see also \cite{CIP}) based on the Lumer-Phillips Theorem, one can find that problem \eqref{Pro1}-\eqref{Pro2} with initial data $\left[\begin{smallmatrix} u_0\\  u_1\end{smallmatrix}\right]\in D(L)\times L^2(\mathbb{R}^N)$ has a unique mild solution
\[
u\in C([0,+\infty); D(L))\cap C^1([0,+\infty); L^2(\mathbb{R}^N)),
\]
and the associated energy identity holds
\[
E_u(t)+\int_0^t\|L^{\frac12}\partial_tu(\cdot,s)\|_{L^2(\mathbb{R}^N)}^2ds=E_u(0),\ t>0
\]
where
\[
E_u(t)=\dfrac{1}{2}\Big(\|\partial_tu(\cdot,t)\|_{L^2(\mathbb{R}^N)}^2+\dfrac{1}{4}\|L  u(\cdot,t)\|_{L^2(\mathbb{R}^N)}^2+\dfrac{\pi^2}{4}\| u(\cdot,t)\|_{L^2(\mathbb{R}^N)}^2\Big)
\]
see below for more details. 

We aim to investigate the behavior of solutions for the problem \eqref{Pro1}-\eqref{Pro2}, to find an asymptotic profile of solutions in the $L^2$ framework to the problem \eqref{Pro1}-\eqref{Pro2}, and to apply it to investigate the rate of decay of solutions in terms of the $L^2$-norm, as well as in \cite{CIP}.

\subsection{Organization of the paper}

This paper is organized as follows. In Section \ref{Sec2} we present the problem \eqref{Pro1}-\eqref{Pro2} in the Fourier space and study the decay of solutions as $t$ goes to infinity in the $L^2-$sense. In Section \ref{FinalSection} we present a result on the asymptotic profile for solution of  \eqref{Pro1}-\eqref{Pro2} $L^2-$sense. Finally, in Section \ref{sec4} we present a result on the optimal decay rate of $L^2-$norm for solution of  \eqref{Pro1}-\eqref{Pro2} $L^2-$sense.

\section{Preliminaries}\label{Sec2}

We need to consider the equivalent problem to \eqref{Pro1}-\eqref{Pro2} in the Fourier space which is
\begin{equation}\label{Pro3}
\partial_t^2\hat u(\xi,t)+\dfrac{1}{4}\log^2(1+|\xi|^2)\hat u(\xi,t)+\dfrac{\pi^2}{4}\hat u(\xi,t)+\log(1+|\xi|^2)\partial_t \hat u(\xi,t)=0,\  \xi\in\mathbb{R}^N,\ t>0,
\end{equation}
 subject to initial conditions
\begin{equation}\label{Pro4}
\hat u(\xi,0)=\hat u_0(\xi),\ \partial_t \hat u(\xi,0)=\hat u_1(\xi),\ \xi\in\mathbb{R}^N.
\end{equation}

The characteristic roots $\lambda_\pm$ for the characteristic polynomial associated with \eqref{Pro3} are given by
\[
\lambda_\pm=\dfrac{1}{2}\Big[-\log(1+|\xi|^2)\pm i\dfrac{\pi}{2}\Big],\ \xi\in\mathbb{R}^N,
\]
that is, $\lambda_\pm$ are are complex-valued for all $\xi\in\mathbb{R}^N$.

It is easy to check that the solution in the Fourier space can be explicitly expressed as
\[
\begin{split}
\hat u(\xi,t)&=e^{-\frac{t\log(1+|\xi|^2)}{2}}\cos\Big(\dfrac{\pi t}{4}\Big)\hat u_0(\xi)+e^{-\frac{t\log(1+|\xi|^2)}{2}}\dfrac{2\log(1+|\xi|^2)}{\pi}\sin\Big(\dfrac{\pi t}{4}\Big)\hat u_0(\xi)\\
&+e^{-\frac{t\log(1+|\xi|^2)}{2}}\dfrac{4}{\pi}\sin\Big(\dfrac{\pi t}{4}\Big)\hat u_1(\xi).
\end{split}
\]

First, multiplying the equation  \eqref{Pro1} by $\partial_t\hat u$ one can get the following point wise energy identity
\begin{equation}\label{Ene1}
\dfrac{d}{dt}E_0(\xi,t)+\log(1+|\xi|^2)|\partial_t\hat u(\xi,t)|^2=0,
\end{equation}
where
\begin{equation}\label{Ene2}
E_0(\xi,t)=\dfrac{1}{2}|\partial_t \hat u(\xi,t)|^2+\dfrac{1}{8}\log^2(1+|\xi|^2)|\hat u(\xi,t)|^2+\dfrac{\pi^2}{8}|\hat u(\xi,t)|^2,
\end{equation}
for $\xi\in\mathbb{R}^N$ and $t>0$, is the total density energy of the system \eqref{Pro3}-\eqref{Pro4}. Note from \eqref{Ene1} that $E_0(\xi,t)$ is a decreasing function of $t$ for each $\xi$ along the solutions of  \eqref{Pro1}.

Second, multiplying the equation \eqref{Pro3} by $\rho(\xi)\bar{ \hat u}$  we obtain the identity
\[
\rho(\xi)\dfrac{d}{dt}(\partial_t \hat u \bar{ \hat u})-\rho(\xi)|\partial_t \hat u|^2+\dfrac{1}{4}\log^2(1+|\xi|^2)|\hat u|^2+\dfrac{\pi^2}{4}|\hat u|^2+\log(1+|\xi|^2) \rho(\xi)\dfrac{d}{dt}\dfrac{|\hat u|^2}{2}=0,
\]
for $ \xi\in\mathbb{R}^N,\ t>0$. 

Taking the real part of the last identity we arrive at
\[
\dfrac{d}{dt}\Big[ \rho(\xi)(\partial_t \hat u\bar{ \hat u})+\rho(\xi)\log(1+|\xi|^2) \dfrac{|\hat u|^2}{2}\Big] +\dfrac{1}{4}\log^2(1+|\xi|^2)|\hat u|^2+\dfrac{\pi^2}{4}|\hat u|^2= \rho(\xi)|\partial_t \hat u|^2
\]
which holds for $ \xi\in\mathbb{R}^N,\ t>0$. 

To proceed further we need to define the following functions on $\mathbb{R}^N\times (0,+\infty)$
\begin{equation}\label{Ene2DFV}
E(\xi,t)=E_0(\xi,t)+  \rho(\xi)(\partial_t \hat u \bar{ \hat u})+\rho(\xi)\log(1+|\xi|^2) \dfrac{|\hat u|^2}{2};
\end{equation}
\begin{equation}\label{Ene2CVBH}
F(\xi,t)= \log(1+|\xi|^2)|\partial_t \hat u|^2 + \dfrac{1}{4}[\log^2(1+|\xi|^2)+\pi^2]|\hat u|^2
\end{equation}
and
\begin{equation}\label{Ene2NHYG}
R(\xi,t)=\rho(\xi)|\partial_t \hat u |^2.
\end{equation}

Note that  we have the following identity
\[
\dfrac{d}{dt}E(\xi,t)+F(\xi,t)=R(\xi,t),
\]
for  $\xi \in \mathbb{R}^N$ and $t>0$.

The following technical results are well-known by   \cite[Lemma 2.1 and Lemma 2.2]{CIP}.

\begin{lemma}\label{l2.1}
Let $p>-1$. If 
\[
I_p(t) = \int_0^1 (1 + r^2)^{-t}r^{p}dr,
\]
then, there exist constants $c_1>0$ and $c_2>0$ such that 
\[
c_1t^{-\frac{p + 1}{2}}\leq I_p(t) \leq c_2 t^{-\frac{p + 1}{2}}, \ \ \mbox{for} \ \ t \gg 1,
\]
\end{lemma}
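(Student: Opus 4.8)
The plan is to estimate the integral $I_p(t) = \int_0^1 (1+r^2)^{-t} r^p\, dr$ for large $t$ by isolating the region near $r=0$, where the integrand is concentrated, and comparing it with a Gaussian-type integral. The key observation is that $(1+r^2)^{-t} = e^{-t\log(1+r^2)}$ and $\log(1+r^2) = r^2 + O(r^4)$ as $r\to 0^+$, so the mass of the integral lives in a shrinking neighborhood of the origin of width $\sim t^{-1/2}$, which already explains the exponent $-(p+1)/2$ after the substitution $r = s/\sqrt{t}$.

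First I would prove the upper bound. Since $\log(1+r^2)\ge c r^2$ for $r\in[0,1]$ with some $c>0$ (indeed one can take $c = \log 2$, as $\log(1+r^2)/r^2$ is decreasing on $(0,1]$), we get $I_p(t) \le \int_0^1 e^{-ct r^2} r^p\, dr \le \int_0^\infty e^{-ct r^2} r^p\, dr$. The substitution $r = s/\sqrt{ct}$ turns this into $(ct)^{-(p+1)/2}\int_0^\infty e^{-s^2} s^p\, ds = \tfrac12(ct)^{-(p+1)/2}\Gamma\!\big(\tfrac{p+1}{2}\big)$, which is finite precisely because $p>-1$; this yields the constant $c_2$. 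For the lower bound I would use $\log(1+r^2)\le r^2$ for all $r\ge 0$, so $I_p(t)\ge \int_0^1 e^{-t r^2} r^p\, dr$. After the substitution $r = s/\sqrt{t}$ this equals $t^{-(p+1)/2}\int_0^{\sqrt{t}} e^{-s^2} s^p\, ds$, and since $\int_0^{\sqrt t} e^{-s^2}s^p\,ds \to \tfrac12\Gamma\!\big(\tfrac{p+1}{2}\big) > 0$, it is bounded below by a positive constant for $t\gg 1$; this gives $c_1$.

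Alternatively, and perhaps more in line with how the referenced papers proceed, one can bound $I_p(t)$ below by restricting integration to $[0,1]$ but keeping the full range after rescaling is cleaner; either way the essential content is the same two-sided comparison $c r^2 \le \log(1+r^2)\le r^2$ on $[0,1]$ together with the elementary Gamma-function computation.

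The only genuine subtlety — hardly an obstacle — is making sure the constants are uniform and the convergence $\int_0^{\sqrt t} e^{-s^2}s^p\,ds \to \Gamma((p+1)/2)/2$ is invoked only for $t$ large enough that this integral exceeds, say, half its limiting value; this is exactly where the hypothesis $t\gg 1$ enters. Everything else is a routine change of variables, so I do not expect any real difficulty; the statement is essentially a Laplace-method / Watson's-lemma estimate carried out by hand.
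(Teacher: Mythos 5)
Your proof is correct: the two-sided comparison $(\log 2)\,r^2 \le \log(1+r^2) \le r^2$ on $[0,1]$ followed by the rescaling $r = s/\sqrt{ct}$ and the Gamma-integral $\int_0^\infty e^{-s^2}s^p\,ds = \tfrac12\Gamma\bigl(\tfrac{p+1}{2}\bigr)$ (finite exactly because $p>-1$) gives both bounds, and your handling of the lower bound for $t\gg 1$ is sound. The paper itself gives no proof — it cites Lemma 2.1 of Char\~ao--Ikehata--Piske — and your argument is essentially the same standard Laplace-type estimate used there, so there is nothing to add.
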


\begin{lemma}\label{l2.2}
Let $p>-1$. If
\[
J_p(t) = \int_1^{\infty}(1 + r^2)^{-t}r^{p}dr,
\]
then, there exist constants $c_1>0$ and $c_2>0$ such that 
\[
c_1\frac{2^{-t}}{t - 1}\leq J_p(t) \leq c_2\frac{2^{-t}}{t - 1}, \ \ \mbox{for} \ \ t \gg 1.
\]
\end{lemma}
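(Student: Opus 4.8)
The plan is to bound the integrand between two elementary functions that are exactly integrable on $[1,\infty)$; since only the order of magnitude of $J_p(t)$ in $t$ is claimed, this will be enough. First I would record the elementary two-sided estimate
\[
2r \;\le\; 1+r^2 \;\le\; 2r^2 , \qquad r\ge 1 ,
\]
where the lower bound is $(r-1)^2\ge0$ and the upper bound is $r^2\ge1$. Since $t>0$, the map $x\mapsto x^{-t}$ is decreasing on $(0,\infty)$, so $2^{-t}r^{-2t}\le(1+r^2)^{-t}\le 2^{-t}r^{-t}$ for $r\ge1$. Multiplying by $r^p$ and integrating over $[1,\infty)$ --- legitimate once $t>p+1$, so that both exponents $p-2t$ and $p-t$ lie below $-1$ --- yields
\[
\frac{2^{-t}}{2t-p-1} \;=\; 2^{-t}\!\int_1^\infty r^{p-2t}\,dr \;\le\; J_p(t) \;\le\; 2^{-t}\!\int_1^\infty r^{p-t}\,dr \;=\; \frac{2^{-t}}{t-p-1}.
\]

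It then remains to pass from these bounds to the stated form. Because $p$ is a fixed number and the estimate is only claimed for $t\gg1$, for all sufficiently large $t$ (with threshold depending only on $p$) one has $t-p-1\ge\tfrac12(t-1)$ and $2t-p-1\le 4(t-1)$, hence
\[
\tfrac14\cdot\frac{2^{-t}}{t-1} \;\le\; \frac{2^{-t}}{2t-p-1} \;\le\; J_p(t) \;\le\; \frac{2^{-t}}{t-p-1} \;\le\; 2\cdot\frac{2^{-t}}{t-1},
\]
which is exactly the assertion with $c_1=\tfrac14$ and $c_2=2$ (these constants are not sharp, but sharpness is not needed).

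The computation is essentially routine, so I do not expect a genuine obstacle; the only point needing mild care is keeping the fixed parameter $p$ from interfering when $2t-p-1$ and $t-p-1$ are replaced by multiples of $t-1$, and checking that the integrability requirement $t>p+1$ is absorbed into ``$t\gg1$''. An alternative, in the spirit of the computations behind Lemma~\ref{l2.1}, would be the substitution $s=1+r^2$, which turns $J_p(t)$ into $\tfrac12\int_2^\infty s^{-t}(s-1)^{(p-1)/2}\,ds$; isolating the neighbourhood of $s=2$ and using $s^{-t}\approx 2^{-t}e^{-t(s-2)/2}$, $(s-1)^{(p-1)/2}\approx1$ there gives the same leading behaviour $2^{-t}/t\asymp 2^{-t}/(t-1)$. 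The direct sandwich above is shorter and already delivers explicit constants, so that is the route I would take.
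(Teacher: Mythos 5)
Your proof is correct and complete: the sandwich $2r\le 1+r^2\le 2r^2$ for $r\ge 1$ gives $2^{-t}r^{-2t}\le (1+r^2)^{-t}\le 2^{-t}r^{-t}$, hence $\frac{2^{-t}}{2t-p-1}\le J_p(t)\le \frac{2^{-t}}{t-p-1}$ once $t>p+1$, and since $p$ is fixed the replacements $t-p-1\ge\frac12(t-1)$ and $2t-p-1\le 4(t-1)$ are valid for all large $t$, so the stated two-sided bound holds with $c_1=\frac14$, $c_2=2$; the integrability threshold is indeed absorbed into $t\gg1$. Note, however, that there is no in-paper proof to compare against: the present paper states Lemma~\ref{l2.2} as a known result and simply cites \cite{CIP} (Lemma 2.2 there), where the estimate is obtained by a direct asymptotic analysis of the integral concentrated near $r=1$ (in the spirit of your alternative route via the substitution $s=1+r^2$, which localizes near $s=2$ and produces the factor $2^{-t}/t$). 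Your comparison argument is therefore a genuinely self-contained and more elementary justification than the citation, at the cost of non-sharp constants, which the statement does not require anyway.
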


We also consider the following weighted Lebesgue spaces
\[
L^{1, \kappa}(\mathbb{R}^N) := \left\{f \in L^1(\mathbb{R}^N) \ | \ \|f\|_{1, \kappa} := \int_{\mathbb{R}^N }(1 + |x|^{\kappa})|f(x)|dx < +\infty \right\}.
\]

Next,  we shall obtain optimal estimates of the total energy of the following Fourier transformed equation together with initial data of the original system  \eqref{Pro1}-\eqref{Pro2}. To do so we employ the so-called energy method in the Fourier space. We define the following function of $\xi$ on $\mathbb{R}^N$ to be used in the research of results on the asymptotic behavior of solutions for of \eqref{Pro1}-\eqref{Pro2} as $t$ goes to infinity in $L^2-$sense.
\begin{equation}\label{Ene3}
	\rho(\xi)= \begin{cases} \dfrac{1}{4}\log(1+|\xi|^{2}),&\,\,\mbox{if}\,\, |\xi|\leqslant\sqrt{e^{\frac{\pi}{\sqrt{3}}}-1} \\
		\dfrac{1}{16}\dfrac{\log^{2}(1+|\xi|^{2})+\pi^{2}}{\log(1+|\xi|^{2})},& \,\,\mbox{if}\,\,|\xi|>\sqrt{e^{\frac{\pi}{\sqrt{3}}}-1}.\end{cases}
\end{equation}

\medskip

Before continuing our argument, we need the next result.

 \begin{lemma}\label{Lemma2.1}
 	The function $\rho(\xi)$ defined in (\ref{Ene3}) satisfies the estimates
 	$$\rho(\xi)\leqslant\frac{\pi}{4\sqrt{3}}$$
 	
 	\noindent for $|\xi|\leqslant\sqrt{e^{\frac{\pi}{\sqrt{3}}}-1}$. Moreover,
 	$$\rho^2(\xi)\leqslant \frac{1}{16}\log^2(1+|\xi|^2)$$
 	
 	\noindent for all $\xi \in \mathbb{R}^N$.
 \end{lemma}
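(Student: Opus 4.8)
The plan is to verify the two stated inequalities directly from the piecewise definition \eqref{Ene3}, treating the two regimes $|\xi|\leqslant\sqrt{e^{\pi/\sqrt3}-1}$ and $|\xi|>\sqrt{e^{\pi/\sqrt3}-1}$ separately and reducing everything to a one-variable calculus problem in the auxiliary variable $s=\log(1+|\xi|^2)\geqslant 0$.

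For the first inequality, on the region $|\xi|\leqslant\sqrt{e^{\pi/\sqrt3}-1}$ we have $\rho(\xi)=\tfrac14\log(1+|\xi|^2)=\tfrac14 s$ with $0\leqslant s\leqslant \pi/\sqrt3$ (the endpoint value being exactly $\log(1+(e^{\pi/\sqrt3}-1))=\pi/\sqrt3$). Hence $\rho(\xi)=\tfrac14 s\leqslant \tfrac14\cdot\tfrac{\pi}{\sqrt3}=\tfrac{\pi}{4\sqrt3}$, which is the claim; monotonicity of $s\mapsto\tfrac14 s$ is all that is needed.

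For the second inequality I would argue by cases on the same threshold. In the inner region, $\rho^2(\xi)=\tfrac{1}{16}s^2=\tfrac1{16}\log^2(1+|\xi|^2)$, so the bound holds with equality. In the outer region, $\rho(\xi)=\tfrac{1}{16}\,\frac{s^2+\pi^2}{s}$ with $s>\pi/\sqrt3$, and the inequality $\rho^2(\xi)\leqslant\tfrac1{16}s^2$ is equivalent, after clearing the positive denominator, to $\tfrac1{16}(s^2+\pi^2)^2\leqslant s^4$, i.e.\ to $(s^2+\pi^2)^2\leqslant 16\,s^4$, i.e.\ to $s^2+\pi^2\leqslant 4s^2$, i.e.\ to $s^2\geqslant \pi^2/3$, i.e.\ to $s\geqslant\pi/\sqrt3$. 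Since on this region $s=\log(1+|\xi|^2)>\pi/\sqrt3$ by definition of the threshold, the inequality follows. Finally one checks consistency of the two pieces at the junction: both formulas give $\rho=\tfrac14\cdot\tfrac{\pi}{\sqrt3}$ there since $\tfrac1{16}\,\frac{(\pi/\sqrt3)^2+\pi^2}{\pi/\sqrt3}=\tfrac1{16}\,\frac{4\pi^2/3}{\pi/\sqrt3}=\tfrac1{16}\cdot\tfrac{4\pi}{\sqrt3}=\tfrac{\pi}{4\sqrt3}$, so $\rho$ is continuous and the bound $\rho^2\leqslant\tfrac1{16}\log^2(1+|\xi|^2)$ holds for every $\xi\in\mathbb{R}^N$.

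There is no real obstacle here; the only mild subtlety is making sure the threshold $\sqrt{e^{\pi/\sqrt3}-1}$ was chosen precisely so that the elementary inequality $s^2+\pi^2\leqslant 4s^2$ flips exactly at the junction, so the verification in the outer region must track that the inequality is saturated (not merely satisfied) at $s=\pi/\sqrt3$. Everything else is monotonicity of $s\mapsto s$ and $s\mapsto \frac{s^2+\pi^2}{s}$ together with the algebraic simplification above.
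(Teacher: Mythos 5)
Your proposal is correct and takes essentially the same route as the paper: the first bound is obtained by the identical computation $\rho(\xi)=\tfrac14\log(1+|\xi|^2)\leqslant\tfrac14\log\bigl(e^{\pi/\sqrt3}\bigr)=\tfrac{\pi}{4\sqrt3}$ on the low-frequency region. For the second bound the paper simply declares it ``immediate from the definition'', whereas you spell out the outer-region check $(s^2+\pi^2)^2\leqslant 16 s^4\iff s\geqslant \pi/\sqrt3$ with $s=\log(1+|\xi|^2)$, which is exactly the verification the paper leaves implicit (and it correctly identifies that the threshold makes the inequality saturate at the junction).
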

 
 \begin{proof}
 	In fact, for $|\xi|\leqslant\sqrt{e^{\frac{\pi}{\sqrt{3}}}-1}$ we have 
 	\[
  \rho(\xi)=\frac{1}{4}\log(1+|\xi|^2)\leqslant \frac{1}{4}\log(e^{\frac{\pi}{\sqrt{3}}})=\frac{\pi}{4\sqrt{3}}.
    \]
In addition, is immediate from the definition of $\rho$ that
 	\[
  \rho^2(\xi)\leqslant \frac{1}{16}\log^2(1+|\xi|^2),
    \]
for all $\xi \in \mathbb{R}^N$.
\qed 
 \end{proof} 
 
\begin{proposition} \label{lema 2.2}
The following inequalities 
 	$$\frac{1}{2}E_{0}(\xi, t) \leqslant E(\xi, t) \leqslant \frac{9}{4}E_{0}(\xi, t),$$
it holds for  $\xi \in \mathbb{R}^N$ and $t>0$. 
 \end{proposition}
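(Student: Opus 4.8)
The plan is to prove the two-sided bound $\tfrac12 E_0 \le E \le \tfrac94 E_0$ by controlling the cross term $\rho(\xi)(\partial_t\hat u\,\bar{\hat u}) + \rho(\xi)\log(1+|\xi|^2)\tfrac{|\hat u|^2}{2}$ that appears in the definition \eqref{Ene2DFV} of $E$ relative to the nonnegative quantity $E_0$ from \eqref{Ene2}. First I would observe that all quantities here are real; indeed taking the real part is already built into \eqref{Ene2DFV}, so write the cross term as $\rho(\xi)\,\mathrm{Re}(\partial_t\hat u\,\bar{\hat u}) + \tfrac12\rho(\xi)\log(1+|\xi|^2)|\hat u|^2$. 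The second piece is manifestly nonnegative (since $\rho\ge 0$ and $\log(1+|\xi|^2)\ge 0$), which already helps with the lower bound. The first piece is the only one that can be negative, and the whole game is to dominate $|\rho(\xi)\,\mathrm{Re}(\partial_t\hat u\,\bar{\hat u})|$ by a small multiple of $E_0$.

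The key step is a Young-type inequality: for any $\varepsilon>0$,
\[
\big|\rho(\xi)\,\mathrm{Re}(\partial_t\hat u\,\bar{\hat u})\big|
\le \rho(\xi)\,|\partial_t\hat u|\,|\hat u|
\le \frac{\varepsilon}{2}|\partial_t\hat u|^2 + \frac{\rho(\xi)^2}{2\varepsilon}|\hat u|^2 .
\]
Now invoke Lemma \ref{Lemma2.1}, which gives $\rho(\xi)^2 \le \tfrac{1}{16}\log^2(1+|\xi|^2) \le \tfrac14\big(\tfrac14\log^2(1+|\xi|^2) + \tfrac{\pi^2}{4}\big)$, so that $\tfrac{\rho(\xi)^2}{2\varepsilon}|\hat u|^2$ is controlled by $\tfrac{1}{2\varepsilon}\cdot\tfrac14\cdot 2\cdot\big(\tfrac18\log^2(1+|\xi|^2)|\hat u|^2 + \tfrac{\pi^2}{8}|\hat u|^2\big)$, i.e. by a multiple of the ``potential'' part of $E_0$. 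Choosing $\varepsilon$ appropriately (e.g. $\varepsilon$ so that the $|\partial_t\hat u|^2$ coefficient is $\tfrac14$, matching $\tfrac14\cdot\tfrac12|\partial_t\hat u|^2$ against the $\tfrac12|\partial_t\hat u|^2$ in $E_0$), I can arrange
\[
\big|\rho(\xi)\,\mathrm{Re}(\partial_t\hat u\,\bar{\hat u})\big| \le \tfrac12 E_0(\xi,t).
\]
One also needs the second cross term $\tfrac12\rho(\xi)\log(1+|\xi|^2)|\hat u|^2$ to be bounded above by a multiple of $E_0$: using $\rho(\xi)\log(1+|\xi|^2)\le \tfrac14\log^2(1+|\xi|^2) + \tfrac{\pi^2}{4}$ (which follows by checking the two cases in the definition \eqref{Ene3} of $\rho$ — on the low-frequency branch $\rho\log = \tfrac14\log^2 \le \tfrac14\log^2+\tfrac{\pi^2}{4}$, and on the high-frequency branch $\rho\log = \tfrac1{16}(\log^2+\pi^2) \le \tfrac14(\log^2+\pi^2)$), this term is $\le \tfrac12\cdot 2\cdot\big(\tfrac18\log^2|\hat u|^2 + \tfrac{\pi^2}{8}|\hat u|^2\big) \le E_0$. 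Actually a sharper accounting with the constant $\tfrac14$ gives $\le \tfrac14 E_0$.

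Putting the pieces together: for the lower bound, $E = E_0 + \rho\,\mathrm{Re}(\partial_t\hat u\,\bar{\hat u}) + \tfrac12\rho\log(1+|\xi|^2)|\hat u|^2 \ge E_0 - \tfrac12 E_0 + 0 = \tfrac12 E_0$. For the upper bound, $E \le E_0 + \tfrac12 E_0 + \tfrac14 E_0 \le \tfrac74 E_0 \le \tfrac94 E_0$, with room to spare (the constant $\tfrac94$ in the statement is not tight, which is convenient — any clean choice of $\varepsilon$ will land below it). The main obstacle, such as it is, is purely bookkeeping: one must be careful that the same choice of $\varepsilon$ simultaneously yields the factor $\tfrac12$ needed for the lower bound and keeps the upper bound below $\tfrac94$, and one must verify the elementary inequality $\rho(\xi)\log(1+|\xi|^2)\le \tfrac14\log^2(1+|\xi|^2)+\tfrac{\pi^2}{4}$ case-by-case from \eqref{Ene3}; neither presents any real difficulty, and Lemma \ref{Lemma2.1} does the essential work of turning $\rho^2$ into the potential energy density.
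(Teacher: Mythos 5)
Your argument is correct and is essentially the paper's own proof: Young's inequality applied to the cross term $\rho(\xi)\,\mathrm{Re}(\partial_t\hat u\,\bar{\hat u})$, combined with the bounds on $\rho$ from Lemma \ref{Lemma2.1}, with the parameter chosen so that the lower bound comes out as exactly $\tfrac12 E_0$. The one bookkeeping point you should make explicit is in the upper bound: the displayed estimate $\rho(\xi)\log(1+|\xi|^2)\le\tfrac14\log^2(1+|\xi|^2)+\tfrac{\pi^2}{4}$ by itself only yields $E\le\tfrac52E_0>\tfrac94E_0$, so your final chain really does need the ``sharper accounting'' $\rho(\xi)\log(1+|\xi|^2)\le\tfrac1{16}\bigl(\log^2(1+|\xi|^2)+\pi^2\bigr)$ — an identity on the high-frequency branch, and on the low-frequency branch precisely the threshold condition $\log(1+|\xi|^2)\le\pi/\sqrt3$ built into \eqref{Ene3} — which gives $E\le\tfrac74E_0\le\tfrac94E_0$, whereas the paper instead uses Young in the form $\tfrac12|\partial_t\hat u|^2+\tfrac{\rho^2(\xi)}{2}|\hat u|^2$ together with $\rho(\xi)\log(1+|\xi|^2)\le\tfrac14\log^2(1+|\xi|^2)$ and lands exactly at $\tfrac94E_0$.
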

 
 \begin{proof} Using the inequality 
 \[
 \rho(\xi)\mbox{Re}(\partial_t \hat u \bar{ \hat u})\geq - \frac{|\partial_t \hat u|^2}{4} - \rho^2(\xi)|\hat u|^2
 \]
  and Proposition \ref{lema 2.2} we have
 	\begin{align*} 
 		E(\xi, t) &= E_0(\xi, t) + \rho(\xi)(\partial_t \hat u \bar{ \hat u}) + \rho(\xi)\log(1+|\xi|^2)\frac{|\hat u|^2}{2} \\
 		& \geq E_0(\xi, t) + \rho(\xi)(\partial_t \hat u \bar{ \hat u}) \\
 		& \geq E_0(\xi, t) - \frac{|\partial_t \hat u|^2}{4} - \rho^2(\xi)|\hat u|^2 \\
 		& = \dfrac{1}{2}|\partial_t \hat u|^2+\dfrac{1}{8}\log^2(1+|\xi|^2)|\hat u|^2+\dfrac{\pi^2}{8}|\hat u|^2 - \frac{|\partial_t \hat u|^2}{4} - \rho^2(\xi)|\hat u|^2 \\
 		& = \frac{|\partial_t \hat u|^2}{4} + \left[\dfrac{1}{8}\log^2(1+|\xi|^2)+\dfrac{\pi^2}{8}- \rho^2(\xi)\right]|\hat u|^2	\\	
 		& \geq \frac{|\partial_t \hat u|^2}{4} + \left[\dfrac{1}{8}\log^2(1+|\xi|^2)+\dfrac{\pi^2}{8}- \left(\dfrac{1}{16}\log^2(1+|\xi|^2)+\dfrac{\pi^2}{16}\right)\right]|\hat u|^2 \\
 		& = \frac{|\partial_t \hat u|^2}{4} + \left[\dfrac{1}{16}\log^2(1+|\xi|^2)+\dfrac{\pi^2}{16}\right]|\hat u|^2 \\
 		& = \dfrac{1}{2}\left(\dfrac{1}{2}|\partial_t \hat u|^2+\dfrac{1}{8}\log^2(1+|\xi|^2)|\hat u|^2+\dfrac{\pi^2}{8}|\hat u|^2\right). 
 	\end{align*}
Consequently
\[
E(\xi, t)\geqslant \dfrac{1}{2}E_0(\xi, t).
\]
for $\xi \in \mathbb{R}^N$ and $t>0$.

On the other hand, using Lemma \ref{Lemma2.1} we have
 	\begin{align*}
 		E(\xi, t) &= E_0(\xi, t) + \rho(\xi)(\partial_t \hat u \bar{\hat u}) + \rho(\xi)\log(1+|\xi|^2)\frac{|\hat u|^2}{2} \\
 		& \leqslant E_0(\xi, t) +  \dfrac{|\partial_t \hat u|^2}{2}+ \dfrac{\rho^2(\xi)}{2}|\hat u|^2 + \dfrac{\rho(\xi)}{2}\log(1+|\xi|^2)|\hat u|^2 \\
 		& \leqslant E_0(\xi, t) +  \dfrac{|\partial_t \hat u|^2}{2}+ \dfrac{1}{32}\log^2(1+|\xi|^2)|\hat u|^2 + \dfrac{1}{8}\log^2(1+|\xi|^2)|\hat u|^2 \\
 		& \leqslant E_0(\xi, t) +  \dfrac{|\partial_t \hat u|^2}{2} + \dfrac{5}{32}\log^2(1+|\xi|^2)|\hat u|^2 \\
 		& = |\partial_t \hat u|^2 + \left[\dfrac{1}{8}\log^2(1+|\xi|^2)+\dfrac{\pi^2}{8} + \dfrac{5}{32}\log^2(1+|\xi|^2)\right]|\hat u|^2 \\
 		& \leqslant |\partial_t \hat u|^2 + \left[\dfrac{9}{32}\log^2(1+|\xi|^2)+\dfrac{9\pi^2}{32}\right]|\hat u|^2 \\
 		& \leqslant |\partial_t \hat u|^2 + \dfrac{9}{4}\left[\dfrac{1}{8}\log^2(1+|\xi|^2)+\dfrac{\pi^2}{32}\right]|\hat u|^2 + \dfrac{|\partial_t \hat u|^2}{8} \\
 		& = \dfrac{9}{4}\left[\dfrac{|\partial_t \hat u|^2}{2} +\left(\dfrac{1}{8}\log^2(1+|\xi|^2)+\dfrac{\pi^2}{32}\right)|\hat u|^2\right] \\
 		& = \dfrac{9}{4} E_0(\xi, t), \\
 	\end{align*}

 Consequently
\[
E(\xi, t)\leqslant \dfrac{9}{4} E_0(\xi, t),
\]
for $\xi \in \mathbb{R}^N$ and $t>0$.
\qed 
 \end{proof}

\medskip

For technical reasons, in order to obtain the asymptotic behavior of the solution of  problem \eqref{Pro1}-\eqref{Pro2}, we now consider the following function of $\xi$  defined on $\mathbb{R}^N$ by 

\begin{equation}\label{Ene4}
	\varphi(\xi)= \begin{cases} \dfrac{2}{3}\log(1+|\xi|^{2}),&\,\,\mbox{if}\,\, |\xi|\leqslant\sqrt{e^{\frac{4}{3}}-1} \\[0.4cm]
		\dfrac{8}{9},& \,\,\mbox{if}\,\,|\xi|> \sqrt{e^{\frac{4}{3}}-1}.\end{cases}
\end{equation}

\begin{proposition}\label{Lemma3_3}
The following inequality
\[
\dfrac{d}{dt}E(\xi,t)+\varphi(\xi) E(\xi,t)\leqslant 0,
\]
holds for $\xi \in \mathbb{R}^N$ and $t>0$. 
\end{proposition}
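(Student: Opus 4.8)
The plan is to use the differential identity $\frac{d}{dt}E(\xi,t)+F(\xi,t)=R(\xi,t)$ established earlier, together with the two-sided comparison $\frac12 E_0\leq E\leq \frac94 E_0$ from Proposition \ref{lema 2.2}, and reduce everything to a pointwise inequality relating $F$, $R$ and $E_0$ (hence $E$). Concretely, it suffices to show that for a suitable constant — here $\varphi(\xi)$ — one has $\varphi(\xi)E(\xi,t)\leq F(\xi,t)-R(\xi,t)$. Using $E\leq \frac94 E_0$ it is enough to prove $\frac94\varphi(\xi)E_0(\xi,t)\leq F(\xi,t)-R(\xi,t)$, i.e.
\[
\frac94\varphi(\xi)\Big(\tfrac12|\partial_t\hat u|^2+\tfrac18[\log^2(1+|\xi|^2)+\pi^2]|\hat u|^2\Big)\leq \big(\log(1+|\xi|^2)-\rho(\xi)\big)|\partial_t\hat u|^2+\tfrac14[\log^2(1+|\xi|^2)+\pi^2]|\hat u|^2.
\]
This splits into two scalar inequalities, one for the coefficient of $|\partial_t\hat u|^2$ and one for the coefficient of $|\hat u|^2$, and $\varphi$ must be chosen so that both hold simultaneously.

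First I would treat the low-frequency regime $|\xi|\leq\sqrt{e^{4/3}-1}$, where $\varphi(\xi)=\frac23\log(1+|\xi|^2)$. Writing $L_\xi:=\log(1+|\xi|^2)$ for brevity, the coefficient-of-$|\hat u|^2$ inequality becomes $\frac94\varphi\cdot\frac18[L_\xi^2+\pi^2]\leq\frac14[L_\xi^2+\pi^2]$, which reduces to $\varphi\leq\frac89$; since on this range $L_\xi\leq\frac43$ we get $\varphi=\frac23 L_\xi\leq\frac89$, so this holds. The coefficient-of-$|\partial_t\hat u|^2$ inequality becomes $\frac94\varphi\cdot\frac12\leq L_\xi-\rho(\xi)$. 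On the relevant (sub)range $\rho(\xi)=\frac14 L_\xi$ (note $\sqrt{e^{4/3}-1}<\sqrt{e^{\pi/\sqrt3}-1}$ since $4/3<\pi/\sqrt3$), so the right side is $\frac34 L_\xi$, and the inequality reads $\frac98\cdot\frac23 L_\xi=\frac34 L_\xi\leq\frac34 L_\xi$ — an equality, hence valid. This is where the specific constants $2/3$ and $e^{4/3}$ in the definition of $\varphi$ are pinned down.

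Next I would treat the high-frequency regime $|\xi|>\sqrt{e^{4/3}-1}$, where $\varphi(\xi)=\frac89$, and here I expect the main (though still elementary) obstacle, because one must juggle the two different branches of $\rho$ meeting at $\sqrt{e^{\pi/\sqrt3}-1}$ and check that $L_\xi-\rho(\xi)$ stays large enough. The coefficient-of-$|\hat u|^2$ inequality is again just $\varphi=\frac89\leq\frac89$, an equality. For the $|\partial_t\hat u|^2$ coefficient one needs $\frac94\cdot\frac89\cdot\frac12=1\leq L_\xi-\rho(\xi)$; on the sub-band $\sqrt{e^{4/3}-1}<|\xi|\leq\sqrt{e^{\pi/\sqrt3}-1}$ one has $\rho=\frac14 L_\xi$ so $L_\xi-\rho=\frac34 L_\xi\geq\frac34\cdot\frac43=1$, and for $|\xi|>\sqrt{e^{\pi/\sqrt3}-1}$ one has $\rho(\xi)=\frac1{16}\frac{L_\xi^2+\pi^2}{L_\xi}$, so $L_\xi-\rho(\xi)=\frac{16L_\xi^2-L_\xi^2-\pi^2}{16L_\xi}=\frac{15L_\xi^2-\pi^2}{16L_\xi}$, which is increasing for $L_\xi$ large and at $L_\xi=\pi/\sqrt3$ equals $\frac{15\pi^2/3-\pi^2}{16\pi/\sqrt3}=\frac{4\pi^2}{16\pi/\sqrt3}=\frac{\pi\sqrt3}{4}>1$; so the bound $L_\xi-\rho(\xi)\geq1$ holds throughout. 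Assembling the two coefficient inequalities gives $\varphi(\xi)E(\xi,t)\leq\varphi(\xi)\cdot\frac94 E_0(\xi,t)\leq F(\xi,t)-R(\xi,t)$, and combining with $\frac{d}{dt}E+F=R$ yields $\frac{d}{dt}E(\xi,t)+\varphi(\xi)E(\xi,t)\leq0$ for all $\xi\in\mathbb{R}^N$, $t>0$, as claimed. The only delicate points are keeping the two break-points ordered correctly and verifying the high-frequency monotonicity of $L_\xi-\rho(\xi)$; everything else is bookkeeping with $E\leq\frac94E_0$.
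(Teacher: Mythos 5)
Your proposal is correct and follows essentially the same route as the paper: both start from the identity $\frac{d}{dt}E+F=R$, use $E\leqslant\frac94E_0$ from Proposition \ref{lema 2.2}, and reduce the claim to the two coefficient inequalities $\varphi(\xi)\leqslant\frac89$ and $\frac98\varphi(\xi)\leqslant\log(1+|\xi|^2)-\rho(\xi)$, which pin down the constants in \eqref{Ene4}. The only (cosmetic) difference is that the paper handles the $|\partial_t\hat u|^2$-coefficient uniformly via the bound $\rho(\xi)\leqslant\frac14\log(1+|\xi|^2)$ from Lemma \ref{Lemma2.1}, whereas you compute $\log(1+|\xi|^2)-\rho(\xi)$ explicitly on each branch of $\rho$ (and note that your monotonicity claim actually holds for all $L_\xi>0$, since the derivative of $\frac{15L^2-\pi^2}{16L}$ is everywhere positive), so both verifications amount to the same bookkeeping.
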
 
 
 \begin{proof}
 Note that by \eqref{Ene2}, \eqref{Ene2DFV}, \eqref{Ene2CVBH}, \eqref{Ene2NHYG}, Lemma \ref{Lemma2.1} and Proposition \ref{lema 2.2} we have
 \[
 \begin{split}
 &\dfrac{d}{dt}E(\xi,t)+\varphi(\xi) E(\xi,t)\\
 &=  R(\xi,t)-F(\xi,t)+\varphi(\xi) E(\xi,t) \\
 &\leqslant  R(\xi,t)-F(\xi,t)+\frac{9}{4}\varphi(\xi)E_{0}(\xi, t) \\
 &\leqslant \left [ \dfrac{3}{8} (3\varphi(\xi) - 2\log (1 + |\xi|^2))\right]|\partial_t \hat u(\xi , t)|^2 + \left [\frac{\log ^2 (1 + |\xi|^2) + \pi^2}{4} \right]\left [\frac{9\varphi(\xi) - 8}{8} \right]| \hat u (\xi , t)|^2\\
 &\leqslant 0,
 \end{split}
 \]
for  $\xi \in \mathbb{R}^N$ and $t>0$.
 \end{proof}

\medskip
Now we may note that Proposition \ref{Lemma3_3} implies 
\[
E(\xi,t)\leqslant E(\xi,0) e^{-\varphi(\xi)t}
\]
Combining the last estimate with Proposition \ref{lema 2.2} we arrive at the important proposition.
\[
E_0(\xi,t)\leqslant\dfrac{9}{2} E_0(\xi,0) e^{-\varphi(\xi)t}
\]
for  $\xi \in \mathbb{R}^N$ and $t>0$.
 
 That is, using the definition of $E(\xi,t)$ we have obtained the important point wise estimates in the Fourier space.

\begin{proposition}\label{Prop3.1Refdasrjhk}
The following inequality
\begin{equation}\label{eq prop3.1}
\begin{split}
&|\partial_t \hat u(\xi,t)|^2+\dfrac{1}{4}\log^2(1+|\xi|^2)| \hat u (\xi,t)|^2+\dfrac{\pi^2}{4}| \hat u (\xi,t)|^2\\
&\leqslant \dfrac{9}{2}\Big(|\hat u_1 (\xi)|^2+\dfrac{1}{4}\log^2(1+|\xi|^2)|\hat u_0 (\xi)|^2+\dfrac{\pi^2}{4}| \hat u_0 (\xi)|^2\Big) e^{-\varphi(\xi)t},
\end{split}
\end{equation}
 it holds for $\xi\in\mathbb{R}^N$ and  $t>0$, and
\begin{equation}\label{eq2prop3.1}
|\hat u (\xi,t)|^2 \leqslant 18\left( \dfrac{1}{\log ^2(1 + |\xi|^2)+\pi^2}|\hat u_1 (\xi)|^2 + \dfrac{1}{4}| \hat u_0 (\xi)|^2 \right) e^{-\varphi(\xi)t},
\end{equation}
for $\xi\in\mathbb{R}^N$ and  $t>0$.
\end{proposition}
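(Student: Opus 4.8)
The plan is to integrate the differential inequality of Proposition \ref{Lemma3_3} and then transfer the resulting bound on $E(\xi,t)$ back to $E_0(\xi,t)$ by means of the energy equivalence in Proposition \ref{lema 2.2}, finally reading off the two stated inequalities by discarding or rescaling individual terms. So this is essentially a bookkeeping argument built on the two previous propositions.

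First, I fix $\xi\in\mathbb{R}^N$ and regard $t\mapsto E(\xi,t)$ as a differentiable function on $(0,+\infty)$. Proposition \ref{Lemma3_3} gives $\frac{d}{dt}E(\xi,t)+\varphi(\xi)E(\xi,t)\leqslant 0$; multiplying by the integrating factor $e^{\varphi(\xi)t}$ shows that $t\mapsto e^{\varphi(\xi)t}E(\xi,t)$ is nonincreasing, hence $E(\xi,t)\leqslant E(\xi,0)\,e^{-\varphi(\xi)t}$ for all $t>0$. Next I apply Proposition \ref{lema 2.2} at both times: $E_0(\xi,t)\leqslant 2E(\xi,t)$ and $E(\xi,0)\leqslant \frac{9}{4}E_0(\xi,0)$. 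Chaining these with the previous estimate yields $E_0(\xi,t)\leqslant \frac{9}{2}E_0(\xi,0)\,e^{-\varphi(\xi)t}$.

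Recalling the definition \eqref{Ene2} of $E_0$, I multiply this last inequality by $2$, using that $2E_0(\xi,t)=|\partial_t\hat u(\xi,t)|^2+\frac14\log^2(1+|\xi|^2)|\hat u(\xi,t)|^2+\frac{\pi^2}{4}|\hat u(\xi,t)|^2$ and similarly $2E_0(\xi,0)=|\hat u_1(\xi)|^2+\frac14\log^2(1+|\xi|^2)|\hat u_0(\xi)|^2+\frac{\pi^2}{4}|\hat u_0(\xi)|^2$; this is exactly \eqref{eq prop3.1}. To obtain \eqref{eq2prop3.1} I then drop the nonnegative term $|\partial_t\hat u(\xi,t)|^2$ from the left-hand side of \eqref{eq prop3.1}, group the coefficient of $|\hat u(\xi,t)|^2$ as $\frac14(\log^2(1+|\xi|^2)+\pi^2)$ (and likewise on the right the combination $\frac14\log^2(1+|\xi|^2)|\hat u_0(\xi)|^2+\frac{\pi^2}{4}|\hat u_0(\xi)|^2=\frac14(\log^2(1+|\xi|^2)+\pi^2)|\hat u_0(\xi)|^2$), and divide through by $\frac14(\log^2(1+|\xi|^2)+\pi^2)>0$; the constant becomes $\frac{9}{2}\cdot 4=18$ and the terms rearrange into the claimed form.

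There is no real obstacle: the statement is a direct consequence of Propositions \ref{Lemma3_3} and \ref{lema 2.2}. The only points to watch are trivial ones — that the integrating factor argument is legitimate (which only needs $t\mapsto E(\xi,t)$ to be absolutely continuous, already guaranteed by the regularity of the mild solution in the Fourier variable) and that the divisor $\log^2(1+|\xi|^2)+\pi^2$ never vanishes, which is clear since $\pi^2>0$.
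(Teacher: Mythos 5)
Your argument is correct and is essentially the paper's own proof: the authors likewise integrate the differential inequality of Proposition \ref{Lemma3_3} to get $E(\xi,t)\leqslant E(\xi,0)e^{-\varphi(\xi)t}$, combine it with the two-sided equivalence of Proposition \ref{lema 2.2} to obtain $E_0(\xi,t)\leqslant \tfrac{9}{2}E_0(\xi,0)e^{-\varphi(\xi)t}$, and then read off \eqref{eq prop3.1} and \eqref{eq2prop3.1} exactly as you do (multiply by $2$, drop $|\partial_t\hat u|^2$, divide by $\tfrac14(\log^2(1+|\xi|^2)+\pi^2)$). No gaps; your bookkeeping of the constants ($\tfrac92\cdot 4=18$) matches the stated bounds.
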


Now we investigate the decay rate of the total energy $E_u$ and $L^2-$norm of the solution itself under the $L^1-$framework on the initial data.

\begin{theorem}\label{asaSFT}
Let $u(x,t)$ be the solution to problem \eqref{Pro1}-\eqref{Pro2} with initial data
\[
\begin{bmatrix}u_0\\ u_1 \end{bmatrix}\in (D(L)\cap  L^1(\mathbb{R}^N))\times (L^2(\mathbb{R}^N)\cap L^1(\mathbb{R}^N))
\]
Then, the total energy of this system satisfies for $t \gg 0$
\[
\begin{split}
&\|\partial_t u(\cdot,t)\|^2_{L^2(\mathbb{R}^N)}+\dfrac{1}{4}\|Lu(\cdot,t)\|^2_{L^2(\mathbb{R}^N)}+\dfrac{\pi^2}{4}\| u(\cdot,t)\|_{L^2(\mathbb{R}^N)}^2\\ &\leqslant C\left(\| u_1\|^{2}_{L^1(\mathbb{R}^N)} t^{-\frac{N}{2}} +\| u_0\|^{2}_{L^1(\mathbb{R}^N)} t^{-\frac{N+4}{2}} + \|  u_0\|^{2}_{L^1(\mathbb{R}^N)}  \pi^2 t^{-\frac{N}{2}}  \right)\\& + C\left( 2^{-\frac{2}{3}t} \| u_1\|^{2}_{L^2(\mathbb{R}^N)} +\left(\dfrac{16 + 9\pi^2}{36} \right) 2^{-\frac{2}{3}t}\|  u_0\|_{L^2(\mathbb{R}^N)}^2  \right) + C e^{-\frac{8}{9}t} E_u (0),
\end{split}
\]where $C$ is a positive constant depending only on $N$.
\end{theorem}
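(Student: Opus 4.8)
The plan is to transfer the estimate to the Fourier side, apply the pointwise bound \eqref{eq prop3.1} of Proposition \ref{Prop3.1Refdasrjhk}, and then integrate over $\mathbb{R}^N$ after splitting frequency space into the two regimes in which $\varphi$ (see \eqref{Ene4}) is logarithmic, resp.\ constant. By Plancherel's identity (up to a fixed dimensional constant, absorbed into $C$) the left-hand side of the asserted inequality equals
\[
\int_{\mathbb{R}^N}\Big(|\partial_t\hat u(\xi,t)|^2+\tfrac14\log^2(1+|\xi|^2)|\hat u(\xi,t)|^2+\tfrac{\pi^2}{4}|\hat u(\xi,t)|^2\Big)\,d\xi ,
\]
which by \eqref{eq prop3.1} is bounded by
\[
\tfrac92\int_{\mathbb{R}^N}\Big(|\hat u_1(\xi)|^2+\tfrac14\log^2(1+|\xi|^2)|\hat u_0(\xi)|^2+\tfrac{\pi^2}{4}|\hat u_0(\xi)|^2\Big)e^{-\varphi(\xi)t}\,d\xi .
\]
I then split $\mathbb{R}^N$ into the three zones $\{|\xi|\le 1\}$, $\{1<|\xi|\le\sqrt{e^{4/3}-1}\}$, $\{|\xi|>\sqrt{e^{4/3}-1}\}$ and estimate each piece separately, using $e^{-\varphi(\xi)t}=(1+|\xi|^2)^{-\frac23 t}$ on the first two and $e^{-\varphi(\xi)t}=e^{-\frac89 t}$ on the last.

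On the low-frequency zone $\{|\xi|\le1\}$ I use the elementary bounds $|\hat u_j(\xi)|^2\le C_N\|u_j\|_{L^1(\mathbb{R}^N)}^2$ and $\log(1+|\xi|^2)\le|\xi|^2$, the latter to replace the logarithmic weight by $|\xi|^4$. Passing to polar coordinates, the three resulting radial integrals are controlled by $\int_0^1(1+r^2)^{-\frac23 t}r^{N-1}\,dr=I_{N-1}(\tfrac23 t)$ (for the $u_1$-term and the $\tfrac{\pi^2}{4}u_0$-term) and by $\int_0^1(1+r^2)^{-\frac23 t}r^{N+3}\,dr=I_{N+3}(\tfrac23 t)$ (for the $\log^2$-weighted $u_0$-term), after discarding the contribution of $\{r>1\}$ inside $\{|\xi|\le\sqrt{e^{4/3}-1}\}$, which is $O(2^{-\frac23 t})$. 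Lemma \ref{l2.1} then gives the decay factors $t^{-N/2}$ and $t^{-(N+4)/2}$ respectively (the constant $(\tfrac23)^{-\alpha}$ being absorbed into $C$), which is exactly the first line in the statement.

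On the middle zone $\{1<|\xi|\le\sqrt{e^{4/3}-1}\}$ one still has $e^{-\varphi(\xi)t}=(1+|\xi|^2)^{-\frac23 t}\le 2^{-\frac23 t}$ and, crucially, $\log^2(1+|\xi|^2)\le(4/3)^2=\tfrac{16}{9}$ there; estimating $e^{-\varphi(\xi)t}$ by $2^{-\frac23 t}$, dropping the constraint $|\xi|>1$, and applying Plancherel backwards produces the terms $2^{-\frac23 t}\|u_1\|_{L^2}^2$ and $\big(\tfrac14\cdot\tfrac{16}{9}+\tfrac{\pi^2}{4}\big)2^{-\frac23 t}\|u_0\|_{L^2}^2=\big(\tfrac{16+9\pi^2}{36}\big)2^{-\frac23 t}\|u_0\|_{L^2}^2$ (one could instead invoke Lemma \ref{l2.2} after an $L^1$--$L^\infty$ bound, but the $L^2$ route is the one matching the stated form). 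On the outer zone $\{|\xi|>\sqrt{e^{4/3}-1}\}$ one has $\varphi(\xi)=\tfrac89$, so $e^{-\varphi(\xi)t}=e^{-\frac89 t}$, and integrating the three densities over this zone and enlarging to all of $\mathbb{R}^N$ gives $\|u_1\|_{L^2}^2$, $\tfrac14\|Lu_0\|_{L^2}^2$ and $\tfrac{\pi^2}{4}\|u_0\|_{L^2}^2$, whose sum equals $2E_u(0)$ (finite by hypothesis since $u_0\in D(L)$); this contributes $Ce^{-\frac89 t}E_u(0)$. Summing the three zones and collecting all numerical constants (in particular the $\tfrac92$ of Proposition \ref{Prop3.1Refdasrjhk}) into a single $C=C(N)$ gives the claim, valid for $t\gg1$ as Lemma \ref{l2.1} requires.

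I expect the low-frequency analysis to be the only genuinely delicate step: one must retain the sharp behaviour $\log(1+|\xi|^2)\sim|\xi|^2$ near $\xi=0$ so as not to forfeit the improved rate $t^{-(N+4)/2}$ for the $u_0$-contribution, and one must verify that truncating the radial integral from $[0,\sqrt{e^{4/3}-1}]$ to $[0,1]$, needed to match the precise form of $I_p$ in Lemma \ref{l2.1}, only costs a term of size $O(2^{-\frac23 t})$, negligible against the polynomial main terms for $t\gg1$. The remaining manipulations --- Plancherel's identity, the $L^1$--$L^\infty$ Fourier estimate, and the integral asymptotics of Lemmas \ref{l2.1}--\ref{l2.2} --- are routine.
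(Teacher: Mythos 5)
Your proposal is correct and follows essentially the same route as the paper: Plancherel, the pointwise bound of Proposition \ref{Prop3.1Refdasrjhk}, the same three-zone split at $|\xi|=1$ and $|\xi|=\sqrt{e^{4/3}-1}$, the $L^1$--$L^\infty$ bound with $\log(1+r^2)\le r^2$ and Lemma \ref{l2.1} at low frequencies, the bounds $\log 2\le\log(1+|\xi|^2)\le 4/3$ at middle frequencies, and $\varphi\equiv 8/9$ with $2E_u(0)$ at high frequencies. The only superfluous point is your worry about truncating the radial integral to $[0,1]$: since your low-frequency zone is already $\{|\xi|\le 1\}$, no such truncation (or $O(2^{-2t/3})$ error) is needed.
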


\begin{proof}
Thanks to \eqref{eq prop3.1} and   the Plancherel's theorem (see e.g. \cite[Theorem 1, p. 183]{Evans}), we have 
\[
\begin{split}
&\|\partial_tu(\cdot,t)\|^2_{L^2(\mathbb{R}^N)}+\dfrac{1}{4}\|Lu(\cdot,t)\|^2_{L^2(\mathbb{R}^N)}+\dfrac{\pi^2}{4}\| u(\cdot,t)\|_{L^2(\mathbb{R}^N)}^2\\
&=\|\partial_t \hat u(\cdot,t)\|^2_{L^2(\mathbb{R}^N)}+\dfrac{1}{4}\|\log(1+|\cdot|^2) w(\cdot,t)\|^2_{L^2(\mathbb{R}^N)}+\dfrac{\pi^2}{4}\| w(\cdot,t)\|_{L^2(\mathbb{R}^N)}^2\\
&=\int_{\mathbb{R}^N}\Big(|\partial_t \hat u(\xi,t)|^2+\dfrac{1}{4}\log^2(1+|\xi|^2)| \hat u (\xi,t)|^2+\dfrac{\pi^2}{4}| \hat u (\xi,t)|^2\Big)d\xi\\
&\leqslant \dfrac{9}{2} \int_{\mathbb{R}^N}\Big(|\hat u_1 (\xi)|^2+\dfrac{1}{4}\log^2(1+|\xi|^2)|\hat u_0 (\xi)|^2+\dfrac{\pi^2}{4}| \hat u_0 (\xi)|^2\Big) e^{-\varphi(\xi)t} d\xi\\
&= \dfrac{9}{2}\left[ \int_{|\xi|\leqslant 1}|\hat u_1 (\xi)|^2 e^{-\varphi(\xi)t} d\xi+\dfrac{1}{4}\int_{|\xi|\leqslant 1}\Big(\log^2(1+|\xi|^2)+ \pi^2\Big)| \hat u_0 (\xi)|^2 e^{-\varphi(\xi)t} d\xi\right]\\
&+ \dfrac{9}{2}\left[ \int_{1<|\xi|\leqslant k}|\hat u_1 (\xi)|^2 e^{-\varphi(\xi)t} d\xi+\dfrac{1}{4}\int_{1<|\xi|\leqslant k}\Big(\log^2(1+|\xi|^2)+ \pi^2\Big)| \hat u_0 (\xi)|^2 e^{-\varphi(\xi)t} d\xi\right]\\
&+ \dfrac{9}{2}\left[ \int_{|\xi|>k}|\hat u_1 (\xi)|^2 e^{-\varphi(\xi)t} d\xi+\dfrac{1}{4}\int_{|\xi|>k}\Big(\log^2(1+|\xi|^2)+ \pi^2\Big)| \hat u_0 (\xi)|^2 e^{-\varphi(\xi)t} d\xi\right]\\
&= \dfrac{9}{2}\left(A_1 + A_2 + A_3 \right),
\end{split}
\]
where $k=\sqrt{e^{\frac{4}{3}} -1}$ and $A_i$ ($i =1, 2, 3$) are the integrals at low, medium and high frequencies, respectively.

\medskip

\noindent $\bullet$ Estimate on the zone $|\xi| \leqslant 1$: 

In this zone, we have $\varphi(\xi)= \dfrac{2}{3}\log(1+|\xi|^2 ) $. Assuming the initial data, $u_0$ and $u_1$, are in $L^{1}(\mathbb{R}^N)$, then $\hat{u}_0 ,\hat{u}_1 \in L^{\infty}(\mathbb{R}^N)$, and thus:
\[
\|w_0 \|_{\infty} \leqslant \| u_0 \|_{1}\quad\mbox{and}\quad \|w_1 \|_{\infty} \leqslant \| u_1 \|_{1}.
\]
\[
\begin{split}
A_1 &= \int_{|\xi|\leqslant 1}|\hat u_1 (\xi)|^2 e^{-\frac{2}{3}\log(1+|\xi|^2)t} d\xi+\dfrac{1}{4}\int_{|\xi|\leqslant 1}\Big(\log^2(1+|\xi|^2)+ \pi^2\Big)| \hat u_0 (\xi)|^2 e^{-\frac{2}{3}\log(1+|\xi|^2)t} d\xi\\
&= \int_{|\xi|\leqslant 1}|\hat u_1 (\xi)|^2 (1+|\xi|^2)^{-\frac{2}{3}t} d\xi+\dfrac{1}{4}\int_{|\xi|\leqslant 1}\Big(\log^2(1+|\xi|^2)+ \pi^2\Big)|\hat u_0 (\xi)|^2 (1+|\xi|^2)^{-\frac{2}{3}t} d\xi\\
&\leqslant \|w_1 \|^{2}_\infty \int_{|\xi|\leqslant 1} (1+|\xi|^2)^{-\frac{2}{3}t} d\xi+\dfrac{\| w_0\|^{2}_\infty }{4}\int_{|\xi|\leqslant 1}\Big(\log^2(1+|\xi|^2)+ \pi^2\Big) (1+|\xi|^2)^{-\frac{2}{3}t} d\xi\\
&\leqslant \|u_1 \|^{2}_1 \int_{|\xi|\leqslant 1} (1+|\xi|^2)^{-\frac{2}{3}t} d\xi+\dfrac{\|  u_0\|^{2}_1 }{4}\int_{|\xi|\leqslant 1}\Big(\log^2(1+|\xi|^2)+ \pi^2\Big) (1+|\xi|^2)^{-\frac{2}{3}t} d\xi\\
&= \| u_1\|^{2}_1 \omega_N \int_{0}^{1} (1+r^2 )^{-\frac{2}{3}t}r^{N-1} dr +\dfrac{\| u_0\|^{2}_1 }{4} \omega_N \int_{0}^{1} \Big(\log^2(1+r^2 )+ \pi^2 \Big) (1+r^2 )^{-\frac{2}{3}t} r^{N-1}dr\\ 
&\leqslant \| u_1\|^{2}_1 \omega_N \int_{0}^{1} (1+r^2 )^{-\frac{2}{3}t}r^{N-1} dr +\dfrac{\| u_0\|^{2}_1 }{4} \omega_N \int_{0}^{1} r^{N-1}( r^4 + \pi^2 ) (1+r^2 )^{-\frac{2}{3}t} dr,
\end{split}
\]
where $\omega_N := \int_{|\omega| = 1} d\omega$ is the surface area of the $N$-dimensional unit ball.

Due to the fact that $\log(1+r^2 )\leqslant r^2$ for all $r\geq 0$. Moreover, from Lemma \ref{l2.1}, we may obtain for $t\gg 1$
\[
\begin{split}
A_1 &\leqslant \| u_1\|^{2}_1 \omega_N \int_{0}^{1} (1+r^2 )^{-\frac{2}{3}t}r^{N-1} dr +\dfrac{\|  u_0\|^{2}_1 }{4} \omega_N \int_{0}^{1} r^{N+3}(1+r^2 )^{-\frac{2}{3}t} dr \\&+ \dfrac{\| u_0\|^{2}_1 }{4} \omega_N \pi^2\int_{0}^{1} r^{N-1}(1+r^2 )^{-\frac{2}{3}t} dr\\ 
&\leqslant \| u_1\|^{2}_1 \omega_N I_{N-1}\left( \frac{2}{3}t \right) +\dfrac{\| u_0\|^{2}_1 }{4} \omega_N I_{N+3}\left( \frac{2}{3}t \right) + \dfrac{\|  u_0\|^{2}_1 }{4} \omega_N \pi^2 I_{N-1}\left(\frac{2}{3}t\right)\\ 
&= \|u_1\|^{2}_1 \omega_N \left( \frac{2}{3}t \right)^{-\frac{N}{2}} +\dfrac{\| u_0\|^{2}_1 }{4} \omega_N \left( \frac{2}{3} t\right)^{-\frac{N+4}{2}} + \dfrac{\|  u_0\|^{2}_1 }{4} \omega_N \pi^2 \left(\frac{2}{3}t\right)^{-\frac{N}{2}}\\ 
&\leqslant C_N \left(\| u_1\|^{2}_1 t^{-\frac{N}{2}} +\|  u_0\|^{2}_1 t^{-\frac{N+4}{2}} + \|  u_0\|^{2}_1  \pi^2 t^{-\frac{N}{2}}  \right),
\end{split}
\]
where $C_N$ is a positive constant depending only on $N$.

\medskip

\noindent $\bullet$ Estimate on the zone $1\leqslant|\xi| \leqslant \sqrt{e^{\frac{4}{3}} -1}=k$:

 In this intermediate region, we also observe that $\varphi(\xi)=\dfrac{2}{3}\log(1+|\xi|^2 )$. Consequently, we can approximate the $\log(1 + |\xi|^2)$ as follows:
$$\log(2)\leqslant \log(1 + |\xi|^2)\leqslant \dfrac{4}{3}.$$
Thus,
\[
\begin{split}
A_2 &=\int_{1\leqslant|\xi|\leqslant k}|\hat u_1 (\xi)|^2 e^{-\frac{2}{3}\log(1+|\xi|^2 )t} d\xi+\dfrac{1}{4}\int_{1\leqslant|\xi|\leqslant k}\left(\log^2(1+|\xi|^2)+ \pi^2\right)| \hat u_0 (\xi)|^2 e^{-\frac{2}{3}\log(1+|\xi|^2 )t} d\xi\\
&\leqslant\int_{1\leqslant|\xi|\leqslant k}|\hat u_1 (\xi)|^2 e^{-\frac{2}{3}\log(2 )t} d\xi+\dfrac{1}{4}\int_{1\leqslant|\xi|\leqslant k}\left(\dfrac{16}{9}+ \pi^2\right)| \hat u_0 (\xi)|^2 e^{-\frac{2}{3}\log(2 ) t} d\xi\\
&\leqslant   2^{-\frac{2}{3}t} \|w_1\|^{2}_2 +\left(\dfrac{4}{9}+ \dfrac{\pi^2}{4} \right) 2^{-\frac{2}{3}t}\| w_0\|_{2}^2 \\
&=   2^{-\frac{2}{3}t} \|u_1\|^{2}_2 +\left(\dfrac{16 + 9\pi^2}{36} \right) 2^{-\frac{2}{3}t}\|  u_0\|_{2}^2 .
\end{split}
\]

\medskip

\noindent $\bullet$ Estimate on the zone $|\xi|\geq \sqrt{e^{\frac{4}{3}} -1}=k$:

In this high frequency region, we have $\varphi(\xi)=\dfrac{8}{9}$. Therefore,
\[
\begin{split}
A_3 &=\int_{\xi|\geq k}|\hat u_1 (\xi)|^2 e^{-\frac{8}{9}t} d\xi+\dfrac{1}{4}\int_{|\xi|\geq k}\left(\log^2(1+|\xi|^2)+ \pi^2\right)| \hat u_0 (\xi)|^2 e^{-\frac{8}{9}t} d\xi\\
&\leqslant e^{-\frac{8}{9}t} \|w_1\|^{2}_{L^2(\mathbb{R}^N)} + \dfrac{e^{-\frac{8}{9}t}}{4}\int_{\mathbb{R}^N}\left(\log^2(1+|\xi|^2)+ \pi^2\right)| \hat u_0 (\xi)|^2d\xi\\
&=e^{-\frac{8}{9}t}\left( \|\partial_t \hat u(\cdot,0)\|^2_{L^2(\mathbb{R}^N)}+\dfrac{1}{4}\|\log(1+|\cdot|^2) w(\cdot,0)\|^2_{L^2(\mathbb{R}^N)}+\dfrac{\pi^2}{4}\| w(\cdot,0)\|_{L^2(\mathbb{R}^N)}^2 \right)\\
&=e^{-\frac{8}{9}t}\left( \| u_1\|^2_{L^2(\mathbb{R}^N)}+\dfrac{1}{4}\|\log(1+|\cdot|^2) u_0\|^2_{L^2(\mathbb{R}^N)}+\dfrac{\pi^2}{4}\| u_0 \|_{L^2(\mathbb{R}^N)}^2 \right)\\
&=2e^{-\frac{8}{9}t} E_u (0),\quad t>0.
\end{split}
\]
By combining the estimates for $A_1$, $A_2$, $A_3$ the proof is now complete.
    \qed
\end{proof}

\begin{remark}
The Theorem \ref{asaSFT} states that the total energy of the system decays as $t^{-\frac{N}{2}}$; that is,
\[
E_u(t)\leq C_{1,n}(\|u_0\|^2_{L^1(\mathbb{R}^N)}+\|u_0\|^2_{L^2(\mathbb{R}^N)}+\|u_1\|^2_{L^1(\mathbb{R}^N)}+\|u_1\|^2_{L^2(\mathbb{R}^N)}+E_u(0))t^{-\frac{N}{2}},\quad t \gg 1,
\]
with a constant $C_{1,n}>0$ that depends only on $n$.
\end{remark}

\begin{proposition}
Let $u(x,t)$ be the solution to problem \eqref{Pro1}-\eqref{Pro2} with initial data
\[
 u_0 , u_1 \in L^2(\mathbb{R}^N) \cap L^1(\mathbb{R}^N).
\]
Then, for $t\gg 1$,
\[
\|u(\cdot , t) \|_{L^2(\mathbb{R}^N)} \leqslant C_N \left(\|u_0\|_{L^2(\mathbb{R}^N)} +\|u_0\|_{L^1(\mathbb{R}^N)} + \|u_1\|_{L^1(\mathbb{R}^N)}  \right)t^{-\frac{N}{2}},
\]
where $C_N> 0$ is a constant depending only on $ N$.
\end{proposition}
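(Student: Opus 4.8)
The plan is to repeat, essentially verbatim, the three–zone Fourier splitting used in the proof of Theorem~\ref{asaSFT}, but starting from the finer pointwise bound \eqref{eq2prop3.1} for $|\hat u(\xi,t)|^2$ instead of \eqref{eq prop3.1}. By Plancherel's theorem one has $\|u(\cdot,t)\|_{L^2(\mathbb{R}^N)}^2=\int_{\mathbb{R}^N}|\hat u(\xi,t)|^2\,d\xi$, and \eqref{eq2prop3.1} bounds the integrand by $18\big(\frac{1}{\log^2(1+|\xi|^2)+\pi^2}|\hat u_1(\xi)|^2+\frac14|\hat u_0(\xi)|^2\big)e^{-\varphi(\xi)t}$. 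I would then split $\mathbb{R}^N$ into the low zone $|\xi|\le1$, the intermediate zone $1<|\xi|\le k$ and the high zone $|\xi|>k$, with $k=\sqrt{e^{4/3}-1}$, the breakpoint at which $\varphi$ in \eqref{Ene4} switches regime.

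On the low zone, $\varphi(\xi)=\frac23\log(1+|\xi|^2)$, so $e^{-\varphi(\xi)t}=(1+|\xi|^2)^{-\frac23 t}$. Since $u_0,u_1\in L^1(\mathbb{R}^N)$ gives $\|\hat u_j\|_{L^\infty}\le\|u_j\|_{L^1}$ and since $\frac{1}{\log^2(1+|\xi|^2)+\pi^2}\le\pi^{-2}$, both contributions reduce, after passing to polar coordinates, to a dimensional multiple of $I_{N-1}(\frac23 t)$; Lemma~\ref{l2.1} with $p=N-1$ then yields the bound $C_N(\|u_0\|_{L^1}^2+\|u_1\|_{L^1}^2)\,t^{-N/2}$. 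This is the zone responsible for the rate $t^{-N/2}$.

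On the intermediate zone, $\varphi(\xi)=\frac23\log(1+|\xi|^2)\ge\frac23\log2$, so $e^{-\varphi(\xi)t}\le 2^{-\frac23 t}$, and integrating over a set of finite measure (using either $\|\hat u_j\|_{L^\infty}\le\|u_j\|_{L^1}$ or $\|\hat u_j\|_{L^2}=\|u_j\|_{L^2}$) produces a term that is exponentially small in $t$. On the high zone, $\varphi(\xi)=\frac89$ and $\frac{1}{\log^2(1+|\xi|^2)+\pi^2}\le 1$, so this piece is at most $C\,e^{-\frac89 t}(\|u_0\|_{L^2}^2+\|u_1\|_{L^2}^2)$. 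Since $2^{-\frac23 t}$ and $e^{-\frac89 t}$ are $\le C_N t^{-N/2}$ for $t\gg1$, the intermediate and high contributions are absorbed into the claimed estimate. Adding the three zones, estimating $\|u(\cdot,t)\|_{L^2(\mathbb{R}^N)}^2\le C_N t^{-N/2}(\,\cdots\,)$ and taking square roots via $\sqrt{a+b+c}\le\sqrt a+\sqrt b+\sqrt c$ gives the asserted inequality.

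The computations involved are routine (polar coordinates, the elementary inequality $\log(1+r^2)\le r^2$, and Lemma~\ref{l2.1}); the only point requiring mild care is the high–frequency zone, where \eqref{eq2prop3.1} controls $\int_{|\xi|>k}|\hat u_j(\xi)|^2\,d\xi$ only through $\|u_j\|_{L^2}^2$ --- the weight $1/(\log^2(1+|\xi|^2)+\pi^2)$ decays too slowly to be integrable on $\mathbb{R}^N$ --- so one genuinely uses the hypothesis $u_0,u_1\in L^2(\mathbb{R}^N)$ there and then relies on the exponential factor $e^{-\frac89 t}$ to reabsorb the resulting terms into $C_N(\cdots)t^{-N/2}$ for $t$ large. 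No blow-up interferes, since every $\lambda_\pm$–governed factor appearing in \eqref{eq2prop3.1} is genuinely decaying.
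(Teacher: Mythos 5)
Your proposal is correct and follows essentially the same route as the paper's own proof: Plancherel's theorem combined with the pointwise bound \eqref{eq2prop3.1}, the identical three-zone splitting at $|\xi|=1$ and $|\xi|=\sqrt{e^{4/3}-1}$, Lemma \ref{l2.1} on the low-frequency zone, and exponential smallness absorbing the middle- and high-frequency contributions into $t^{-N/2}$. (As you correctly observe, the high-frequency zone forces a $\|u_1\|_{L^2}$ term; the paper's own proof produces it as well, even though the stated right-hand side omits it, so this is a defect of the statement rather than of your argument.)
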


\begin{proof}
By integrating the inequality (\ref{eq2prop3.1}) on $\mathbb{R}^N$ and using again the Plancherel's theorem (see e.g. \cite[Theorem 1, p. 183]{Evans}) we obtain
\[
\begin{split}
\|u(t,\cdot)\|^2_{L^2(\mathbb{R}^N)}  &\leqslant 18\int_{\mathbb{R}^N}\left( \dfrac{1}{\log ^2(1 + |\xi|^2)+\pi^2}|\hat u_1 (\xi)|^2 + \dfrac{1}{4}| \hat u_0 (\xi)|^2 \right) e^{-\varphi(\xi)t} d\xi\\
&= 18\int_{|\xi|\leqslant 1}\left( \dfrac{1}{\log ^2(1 + |\xi|^2)+\pi^2}|\hat u_1 (\xi)|^2 + \dfrac{1}{4}| \hat u_0 (\xi)|^2 \right) e^{-\varphi(\xi)t} d\xi\\
&+ 18\int_{1<|\xi|\leqslant k}\left( \dfrac{1}{\log ^2(1 + |\xi|^2)+\pi^2}| \hat u_1 (\xi)|^2 + \dfrac{1}{4}|\hat u_0 (\xi)|^2 \right) e^{-\varphi(\xi)t} d\xi\\
&+ 18\int_{|\xi|> k}\left( \dfrac{1}{\log ^2(1 + |\xi|^2)+\pi^2}| \hat u_1 (\xi)|^2 + \dfrac{1}{4}| \hat u_0 (\xi)|^2 \right) e^{-\varphi(\xi)t} d \xi \\
&=:18(B_1+B_2+B_3),
\end{split}
\]
with $B_i$ $ (i = 1, 2, 3)$ according to the integrals on low, middle and high frequencies, respectively.

On the low frequency zone $|\xi|\leqslant 1$ we have $\varphi (\xi) =\frac23 \log (1+|\xi |^2)$, and it holds the estimates

\[
\begin{split} 
B_1 &= \int_{|\xi|\leqslant 1} \dfrac{1}{\log ^2(1 + |\xi|^2)+\pi^2}| \hat u_1 (\xi)|^2 e^{-\frac{2\log(1+|\xi|^2)}{3}t} d\xi  + \dfrac{1}{4}\int_{|\xi|\leqslant 1}|\hat u_0 (\xi)|^2  e^{-\frac{2\log(1+|\xi|^2)}{3}t} d\xi \\
&\leqslant \|\hat{u}_1\|_\infty^2\int_{|\xi|\leqslant 1}\dfrac{1}{\log^2 (1+|\xi|^2)+\pi^2}(1+|\xi|^2)^{-\frac{2}{3}t}d\xi + \dfrac{\|\hat{u}_0\|_\infty^2}{4}\int_{|\xi|\leqslant 1}(1+|\xi|^2)^{-\frac{2}{3}t}d\xi\\
&\leqslant \dfrac{\|\hat{u}_1\|_\infty^2 }{\pi^2}\int_{|\xi|\leqslant 1}(1+|\xi|^2)^{-\frac{2}{3}t}d\xi + \dfrac{\|\hat{u}_0\|_\infty^2}{4}\int_{|\xi|\leqslant 1}(1+|\xi|^2)^{-\frac{2}{3}t}d\xi\\
&\leqslant \dfrac{\|u_1\|_{1}^{2} }{\pi^2}\omega_N \int_{0}^{1} (1+r^2)^{-\frac{2}{3}t} r^{N-1} dr + \dfrac{\|u_0\|_{1}^{2}}{4} \omega_N\int_{0}^{1} (1+r^2)^{-\frac{2}{3}t}r^{N-1}dr,
\end{split} 
\]
where $\omega_N := \int_{|\omega| = 1} d\omega$ is the surface area of the $N$-dimensional unit ball.

Thus, for $t \gg 1$, by Lemma \ref{l2.1}, we have

\[
\begin{split} 
B_1 &\leqslant \dfrac{\|u_1\|_1^2 }{\pi^2}\omega_N I_{N-1}\left(\frac{2}{3}t\right) + \dfrac{\|u_0\|_{1}^{2}}{4} \omega_N I_{N-1}\left(\frac{2}{3}t\right)\\
&= \dfrac{\|u_1\|_1^2 }{\pi^2}\omega_N \left(\frac{2}{3}t\right)^{-\frac{N}{2}} + \dfrac{\|u_0\|_{1}^{2}}{4} \omega_N \left(\frac{2}{3}t\right)^{-\frac{N}{2}}\\
&\leqslant C_N t^{-\frac{N}{2}} \left( \|u_1\|_1^2  + \|u_0\|_1^2 \right),
\end{split} 
\]
where $C_N$ is a constant depending only on $N$.

On the middle frequency zone, we have $1< |\xi| \leqslant \sqrt{e^{\frac43}-1}$, and it holds the following inequality
\begin{equation*}
\log (2) \leqslant \log(1+|\xi|^2).
\end{equation*}Thus, one has for all $t>0$,

\[
\begin{split} 
B_2 &= \int_{1< |\xi|\leqslant k} \dfrac{1}{\log ^2(1 + |\xi|^2)+\pi^2}|\hat u_1 (\xi)|^2 e^{-\frac{2\log(1+|\xi|^2)}{3}t} d\xi  + \dfrac{1}{4}\int_{1\leqslant |\xi|\leqslant k}|\hat u_0 (\xi)|^2  e^{-\frac{2\log(1+|\xi|^2)}{3}t} d\xi \\
&\leqslant \dfrac{1}{\pi^2} \int_{1< |\xi|\leqslant k}|\hat u_1 (\xi)|^2 e^{-\frac23 (\log 2) t} d\xi+ \dfrac{1}{4}\int_{1\leqslant |\xi|\leqslant k}|\hat u_0 (\xi)|^2  e^{-\frac23 (\log 2) t} d\xi \\
&= \dfrac{1}{\pi^2} \int_{1< |\xi|\leqslant k}|\hat u_1 (\xi)|^2 2^{-\frac23 t} d\xi+ \dfrac{1}{4}\int_{1\leqslant |\xi|\leqslant k}|\hat u_0 (\xi)|^2 2^{-\frac23 t} d\xi \\
&\leqslant \left[\dfrac{1}{\pi^2}\Vert u_1\Vert_{2}^2+\dfrac{1}{4}\Vert u_0\Vert_{2}^2 \right]2^{-\frac23 t}.
\end{split} 
\]

Finally, we analyzing the estimates on the high frequency zone. On this region, we have $\varphi (\xi)=\dfrac{8}{9}$. Thus, we obtain the estimate 

\[
\begin{split} 
B_3 &= \int_{|\xi|> k} \dfrac{1}{\log ^2(1 + |\xi|^2)+\pi^2}|\hat u_1 (\xi)|^2 e^{-\frac89 t} d\xi  + \dfrac{1}{4}\int_{|\xi|> k}|\hat u_0 (\xi)|^2  e^{-\frac89 t} d\xi \\
&\leqslant \dfrac{1}{\pi^2} \int_{|\xi|> k}|\hat u_1 (\xi)|^2 e^{-\frac89 t} d\xi+ \dfrac{1}{4}\int_{|\xi|> k}|\hat u_0 (\xi)|^2  e^{-\frac89 t} d\xi \\
&\leqslant \left[\dfrac{1}{\pi^2}\Vert u_1\Vert_{2}^2+\dfrac{1}{4}\Vert u_0\Vert_{2}^2 \right]e^{-\frac89 t},
\end{split} 
\]for $t>0$.
\qed
\end{proof}

\section{Asymptotic profile}\label{FinalSection}

To obtain an asymptotic profile of solutions, without  loss of generality, we consider the case where the initial condition $u_0 = 0$. With this, the Cauchy problem \eqref{Pro1}-\eqref{Pro2} in the Fourier space is given by
\begin{equation}\label{Pro5}
\partial_t^2 \hat u (\xi,t)+\dfrac{1}{4}\log^2(1+|\xi|^2)\hat u (\xi,t)+\dfrac{\pi^2}{4}\hat u (\xi,t)+\log(1+|\xi|^2)\partial_t \hat u(\xi,t)=0,
\end{equation}
for $ \xi\in\mathbb{R}^N,\ t>0$, subject to initial conditions
\begin{equation}\label{Pro6}
\hat u(\xi,0)= 0,\ \partial_t \hat u(\xi,0)=\hat u_1 (\xi),\ \xi\in\mathbb{R}^N,
\end{equation}

The solution formula can be expressed by 
\begin{equation}\label{sform}
    \hat u (\xi,t) = e^{-\dfrac{t\log(1+|\xi|^2)}{2}}\dfrac{4}{\pi}\sin\Big(\dfrac{\pi}{4}t\Big)\hat u_1 (\xi),
\end{equation}
for  $\xi \in \mathbb{R}^N$ and $t>0$.

\begin{remark}\label{decomp}
Using the Fourier transform, we can obtain a decomposition of the initial condition $\hat u_1$ as follows:
\[
\hat u_1 (\xi) = A(\xi) - iB(\xi) + P_1, \ \xi \in \mathbb{R}^N,
\]
where
    \[
    P_1 = \int_{\mathbb{R}^N}u_1(x)dx, 
    \]
    \[
    A(\xi) = \int_{\mathbb{R}^N}u_1(x)(\cos(\xi x) - 1)dx,
    \]
and
\[
B(\xi) = \int_{\mathbb{R}^N}u_1(x)\sin(\xi x)dx.
\]
\end{remark}

According to the above decomposition, we can state the following lemma (see \cite[Lemma 4.1]{CIP}).

\begin{lemma}\label{est1}
    Let $\kappa \in [0,1]$. For each $u_1 \in L^{1, \kappa}(\mathbb{R}^N)$ and $\xi \in \mathbb{R}^N$, we have
\[
    |A(\xi)| \leqslant K|\xi|^{\kappa}\|u_1\|_{L^{1,\kappa}}
\]
and
\[
|B(\xi)| \leqslant M|\xi|^{\kappa}\|u_1\|_{L^{1,\kappa}}, 
    \]
with positive constants $K$ and $M$ that depend only on $N$.    
\end{lemma}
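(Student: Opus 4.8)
The plan is to reduce both bounds to elementary pointwise estimates on the trigonometric functions appearing in the decomposition of Remark \ref{decomp}, after which the weighted $L^1$ control is immediate. Throughout, write $\xi x$ for the inner product $\xi\cdot x$, so that $|\xi x|\leqslant|\xi|\,|x|$ by Cauchy--Schwarz.

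First I would establish the two scalar inequalities, valid for every $\theta\in\mathbb{R}$ and every $\kappa\in[0,1]$:
\[
|\cos\theta-1|\leqslant 2|\theta|^{\kappa},\qquad |\sin\theta|\leqslant|\theta|^{\kappa}.
\]
Each follows by splitting into the ranges $|\theta|\leqslant1$ and $|\theta|>1$. For $|\theta|\leqslant1$ one uses $|\cos\theta-1|\leqslant\tfrac12\theta^{2}\leqslant|\theta|^{\kappa}$ (since $|\theta|^{2-\kappa}\leqslant1$) together with $|\sin\theta|\leqslant|\theta|\leqslant|\theta|^{\kappa}$; for $|\theta|>1$ one uses the trivial bounds $|\cos\theta-1|\leqslant2\leqslant2|\theta|^{\kappa}$ and $|\sin\theta|\leqslant1\leqslant|\theta|^{\kappa}$.

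Next, applying these inequalities with $\theta=\xi x$ and using $|\xi x|^{\kappa}\leqslant|\xi|^{\kappa}|x|^{\kappa}$ inside the integrals defining $A(\xi)$ and $B(\xi)$, I would bound
\[
|A(\xi)|\leqslant 2|\xi|^{\kappa}\int_{\mathbb{R}^N}|x|^{\kappa}|u_1(x)|\,dx,\qquad |B(\xi)|\leqslant|\xi|^{\kappa}\int_{\mathbb{R}^N}|x|^{\kappa}|u_1(x)|\,dx.
\]
Since $|x|^{\kappa}\leqslant1+|x|^{\kappa}$, the remaining integral is at most $\|u_1\|_{L^{1,\kappa}}$, which yields the claim with $K=2$ and $M=1$; these constants are in fact absolute, hence in particular depend only on $N$, as required.

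There is no substantial obstacle here. The only point requiring a little care is the uniform interpolation between the trivial bound (exponent $0$) and the quadratic, respectively linear, bound (exponents $2$ and $1$) in order to reach the intermediate exponent $\kappa\in[0,1]$; this is precisely what the case split $|\theta|\leqslant1$ versus $|\theta|>1$ accomplishes, and the reduction to $|\xi x|\leqslant|\xi||x|$ is just Cauchy--Schwarz. One could alternatively invoke \cite[Lemma 4.1]{CIP} verbatim, but the direct argument above is short and self-contained.
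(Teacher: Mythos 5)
Your proof is correct. Note that the paper itself does not prove this lemma at all: it simply quotes it from \cite[Lemma 4.1]{CIP}, so there is no internal argument to compare against. Your self-contained derivation is the standard proof of that cited result: the interpolation inequalities $|\cos\theta-1|\leqslant 2|\theta|^{\kappa}$ and $|\sin\theta|\leqslant|\theta|^{\kappa}$ for $\kappa\in[0,1]$ (obtained by the split $|\theta|\leqslant1$ versus $|\theta|>1$) are verified correctly, the reduction $|\xi\cdot x|^{\kappa}\leqslant|\xi|^{\kappa}|x|^{\kappa}$ is sound, and the final bound $\int_{\mathbb{R}^N}|x|^{\kappa}|u_1(x)|\,dx\leqslant\|u_1\|_{1,\kappa}$ matches the definition of the weighted norm used in the paper. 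Your argument even gives slightly more than stated, since the constants $K=2$ and $M=1$ are absolute rather than merely $N$-dependent. The only cosmetic point is the degenerate case $\theta=0$ with $\kappa=0$, where $|\theta|^{\kappa}$ should be read as $1$ (or handled trivially since both left-hand sides vanish); this does not affect the conclusion.
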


First, we apply the mean value theorem, to obtain 

\[
\sin\Big(\dfrac{\pi}{4}t\Big) - \sin\Big(t\sqrt{\log(1 + |\xi|^2)}\Big) = t\cos(\mu (\xi)t)\Big[\dfrac{\pi}{4} - \sqrt{\log(1 + |\xi|^2)}\Big],
\]
where  $\mu(\xi) = \dfrac{\pi}{4}\theta + (1 - \theta)\sqrt{\log(1 + |\xi|^2)}$, $\theta \in (0,1)$.

Therefore, we can rewrite the solution formula \eqref{sform} as
\[
\begin{split}
    \hat u (\xi,t) &= \dfrac{4}{\pi}(A(\xi) - iB(\xi))e^{-\frac{t\log(1+|\xi|^2)}{2}}\sin\Big(\dfrac{\pi}{4}t\Big)\\
    &+  \dfrac{4}{\pi}P_1 e^{-\frac{t\log(1+|\xi|^2)}{2}} \sin\Big(t\sqrt{\log(1 + |\xi|^2)}\Big)\\
    &+ \dfrac{4}{\pi}P_1\Big[\dfrac{\pi}{4} - \sqrt{\log(1 + |\xi|^2)}\Big] e^{-\frac{t\log(1+|\xi|^2)}{2}}t \cos(\mu(\xi)t). 
\end{split}
\]
In this section, we obtain decay estimates in time for the remaining terms defined above. First, we define the following  two functions
\begin{equation}\label{FgDefF3}
\begin{split}
    &F_1(\xi, t) = \dfrac{4}{\pi}(A(\xi) - iB(\xi))e^{-\frac{t\log(1+|\xi|^2)}{2}}\sin\Big(\dfrac{\pi}{4}t\Big)\\
    &F_2(\xi, t) = \dfrac{4}{\pi}P_1\Big[\dfrac{\pi}{4} - \sqrt{\log(1 + |\xi|^2)}\Big] e^{-\frac{t\log(1+|\xi|^2)}{2}}t\cos(\mu(\xi)t) \\\
    & F_3(\xi, t) = \dfrac{4}{\pi}P_1 e^{-\frac{t\log(1+|\xi|^2)}{2}} \sin\Big(t\sqrt{\log(1 + |\xi|^2)}\Big).
\end{split}
\end{equation}
Then, we obtain 
\[
\hat u (\xi, t) - F_3(\xi, t) = F_1(\xi, t) + F_2(\xi, t).
\]
Consider $\kappa = 1$. Then using remark \ref{decomp} and Lemma \ref{est1}, we obtain 
\[
\begin{split}
    \int_{|\xi| < 1} |F_1(\xi, t)|^2 d\xi &=\dfrac{16}{\pi^2}\int_{|\xi| < 1}|A(\xi) - iB(\xi)|^2\sin^2\Big(\dfrac{\pi}{4}t\Big)e^{-t\log(1+|\xi|^2)} d\xi\\
    & \leqslant \dfrac{16}{\pi^2}\int_{|\xi| < 1}\left(|A(\xi)| + |B(\xi)|\right)^2 e^{-t\log(1+|\xi|^2)}d\xi\\
    & \leqslant \dfrac{16}{\pi^2}\int_{|\xi| < 1}\left(K + M\right)^2\|u_1\|_{1,1}^2 e^{-t\log(1+|\xi|^2)}d\xi\\
    & \leqslant \dfrac{16}{\pi^2}\omega_N \left(K + M\right)^2\|u_1\|_{1,1}^2\int_0^1 r^{N + 1}(1 + r^2)^{-t} dr\\
    & = \dfrac{16}{\pi^2}\omega_N \left(K + M\right)^2\|u_1\|_{1,1}^2 I_{N + 1}(t)\\
    &\leqslant C_{1,N}\|u_1\|_{1,1}^2 t^{-\frac{N+2}{2}},
\end{split}
\]
where $\omega_N := \int_{|\omega| = 1} d\omega$ is the surface area of the $N$-dimensional unit ball.

On the other hand, as
\[
\dfrac{\pi}{4} - \sqrt{\log(1 + |\xi|^2)} \leqslant \dfrac{\pi}{4},
\]
and from Lemma \ref{l2.1}, we have
\[
\begin{split}
    \int_{|\xi| < 1} |F_2(\xi, t)|^2 d\xi 
    &= \dfrac{16}{\pi^2} |P_1|^2 t^2 \int_{|\xi| < 1} \Big|\dfrac{\pi}{4} - \sqrt{\log(1 + |\xi|^2)}\Big|^2 e^{-t\log(1+|\xi|^2)}\cos^2(\mu(\xi)t)  d\xi \\
    & \leqslant \dfrac{16}{\pi^2}|P_1|^2 t^2  \int_{|\xi| < 1} \dfrac{\pi^2}{16}(1+|\xi|^2)^{-t} d\xi \\ 
    &\leqslant \omega_N |P_1|^2 t^2 \int_0^1 r^2 (1+r^2)^{-t} r^{N - 3}dr\\
    &\leqslant \omega_N |P_1|^2 t^2 \int_0^1 (1+r^2)^{-t} r^{N - 3}dr\\
    &=  \omega_N |P_1|^2 t^2  I_{N - 3}(t)\\
    &\leqslant C_{2,N}|P_1|^2t^{-\frac{N - 2}{2}}.
\end{split}
\]
\begin{theorem} \label{t31}
    Let $u_1 \in L^2(\mathbb{R}^N) \cap L^{1,1}(\mathbb{R}^N)$. Then

\[
\int_{|\xi| < 1}|\hat u(\xi, t) - F_3(\xi, t)|^2 d\xi \leqslant C_{1,N}\|u_1\|_{1,1}^2t^{-\frac{N+2}{2}} + C_{2,N}|P_1|^2t^{-\frac{N - 2}{2}}
\]
for $t\gg 1$ with positive constants $C_{1,N}$ and $C_{2,N}$ that depend only on $N \in \mathbb{N}$.
\end{theorem}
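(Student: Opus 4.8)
The plan is to assemble the two frequency-localized bounds already established above, the final step being a one-line application of the triangle inequality in $L^2$. First I would recall the mean value theorem decomposition, which gives the identity $\hat u(\xi,t) - F_3(\xi,t) = F_1(\xi,t) + F_2(\xi,t)$ for every $\xi\in\mathbb{R}^N$ and $t>0$, with $F_1,F_2,F_3$ as in \eqref{FgDefF3}. Applying the elementary inequality $|a+b|^2\leqslant 2|a|^2+2|b|^2$ pointwise and integrating over the low-frequency ball gives
\[
\int_{|\xi|<1}|\hat u(\xi,t)-F_3(\xi,t)|^2\,d\xi\leqslant 2\int_{|\xi|<1}|F_1(\xi,t)|^2\,d\xi+2\int_{|\xi|<1}|F_2(\xi,t)|^2\,d\xi,
\]
so it only remains to feed in the two estimates derived before the statement.

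For the first integral I would invoke Remark \ref{decomp} and Lemma \ref{est1} with $\kappa=1$, so that $|A(\xi)-iB(\xi)|^2\leqslant(|A(\xi)|+|B(\xi)|)^2\leqslant(K+M)^2\|u_1\|_{1,1}^2|\xi|^2$ on $\{|\xi|<1\}$; after bounding $\sin^2(\pi t/4)\leqslant 1$, passing to polar coordinates and recognizing the radial integral as $I_{N+1}(t)$, Lemma \ref{l2.1} with $p=N+1$ produces the bound $C_{1,N}\|u_1\|_{1,1}^2\,t^{-(N+2)/2}$ for $t\gg 1$. For the second integral I would bound $|\tfrac{\pi}{4}-\sqrt{\log(1+|\xi|^2)}|\leqslant\tfrac{\pi}{4}$ and $\cos^2(\mu(\xi)t)\leqslant 1$ — using here that $\mu(\xi)$ is real, since it lies between $\tfrac{\pi}{4}$ and $\sqrt{\log(1+|\xi|^2)}$ — pull out the factor $t^2$, pass to polar coordinates, and apply Lemma \ref{l2.1} once more to obtain the bound $C_{2,N}|P_1|^2\,t^{-(N-2)/2}$ for $t\gg1$. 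Inserting both estimates into the displayed inequality and absorbing the factor $2$ into $C_{1,N}$ and $C_{2,N}$ gives the claim.

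Since the analytic content is entirely in the $F_1$- and $F_2$-estimates, the final combination is not the obstacle; the points that require care are upstream. The extra decay in the $F_1$-term comes from the gain $|\xi|^{\kappa}$ with $\kappa=1$ in Lemma \ref{est1}, which is available precisely because $u_1$ is assumed in $L^{1,1}(\mathbb{R}^N)$ rather than merely in $L^1(\mathbb{R}^N)$ — this is what dictates the weighted hypothesis. Likewise, the $F_2$-estimate invokes Lemma \ref{l2.1} with exponent $p=N-3$, which requires $N-3>-1$, i.e.\ $N\geqslant 3$, consistent with the standing dimensional assumption; and finiteness of the right-hand side relies on $|P_1|\leqslant\|u_1\|_{L^1}\leqslant\|u_1\|_{1,1}$ together with $u_1\in L^2(\mathbb{R}^N)$, both guaranteed by the hypothesis $u_1\in L^2(\mathbb{R}^N)\cap L^{1,1}(\mathbb{R}^N)$.
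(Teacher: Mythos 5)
Your overall route is the same as the paper's: the theorem is obtained by writing $\hat u-F_3=F_1+F_2$, using $|a+b|^2\leqslant 2|a|^2+2|b|^2$ on $\{|\xi|<1\}$, and inserting the two low-frequency estimates. Your $F_1$ estimate (Lemma \ref{est1} with $\kappa=1$, then $I_{N+1}(t)$ and Lemma \ref{l2.1}) is exactly the paper's and is correct, including your observation that the hypothesis $u_1\in L^{1,1}(\mathbb{R}^N)$ is what supplies the gain $|\xi|$.

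The $F_2$ step, however, does not deliver the bound you (and the theorem) assert. After bounding $\bigl|\frac{\pi}{4}-\sqrt{\log(1+|\xi|^2)}\bigr|\leqslant\frac{\pi}{4}$ and $\cos^2(\mu(\xi)t)\leqslant 1$ and pulling out $t^2$, polar coordinates and Lemma \ref{l2.1} give at best $\int_{|\xi|<1}|F_2|^2d\xi\leqslant C|P_1|^2\,t^2 I_{N-1}(t)\leqslant C|P_1|^2 t^{-\frac{N-4}{2}}$ (and with your choice $p=N-3$ only $t^{-\frac{N-6}{2}}$), not $C|P_1|^2t^{-\frac{N-2}{2}}$: the factor $t^2$ produced by the mean value theorem is not compensated by any smallness, since $\frac{\pi}{4}-\sqrt{\log(1+|\xi|^2)}\to\frac{\pi}{4}$ as $\xi\to 0$ (unlike the analogous quantity in \cite{CIP}, which is small at low frequencies). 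In fairness, the paper's own displayed computation contains the same lapse — the factor $t^2$ silently disappears between the line $\omega_N|P_1|^2t^2I_{N-3}(t)$ and its final bound — so you have reproduced its argument, error included; but as written the step is a non sequitur. The conclusion is easy to rescue, and in a stronger form: avoid the mean value theorem at low frequencies altogether. Since by construction $F_2(\xi,t)=\frac{4}{\pi}P_1e^{-\frac{t}{2}\log(1+|\xi|^2)}\bigl[\sin\bigl(\frac{\pi}{4}t\bigr)-\sin\bigl(t\sqrt{\log(1+|\xi|^2)}\bigr)\bigr]$ exactly, the crude bound $|\sin a-\sin b|\leqslant 2$ gives $|F_2(\xi,t)|\leqslant\frac{8}{\pi}|P_1|(1+|\xi|^2)^{-\frac{t}{2}}$, hence $\int_{|\xi|<1}|F_2|^2d\xi\leqslant \frac{64}{\pi^2}\omega_N|P_1|^2 I_{N-1}(t)\leqslant C_N|P_1|^2t^{-\frac{N}{2}}\leqslant C_N|P_1|^2t^{-\frac{N-2}{2}}$ for $t\gg1$, which yields the stated inequality (indeed with a better exponent in the $|P_1|$-term).
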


\begin{theorem} \label{t32}
    Let $u_1 \in L^2(\mathbb{R}^N) \cap L^{1}(\mathbb{R}^N) $. Then, 
    \[
        \int_{|\xi| \geqslant 1} |\hat u (\xi, t) - F_3(\xi, t)|^2 d\xi \leqslant C(\|u_1\|^2_2 + |P_1|^2)e^{-\eta t},
   \]
for $t \gg 1$ with positive constants $C$ and $\eta$ that depend on $N \in \mathbb{N}$.
\end{theorem}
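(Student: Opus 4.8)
The plan is to estimate $\hat u(\xi,t)-F_3(\xi,t)$ directly from the explicit solution formula \eqref{sform} and the definition of $F_3$ in \eqref{FgDefF3}, exploiting the fact that on the region $|\xi|\geqslant 1$ the damping factor $e^{-t\log(1+|\xi|^2)/2}$ already produces exponential decay in $t$, because $\log(1+|\xi|^2)\geqslant\log 2>0$ there. Concretely, from \eqref{sform}, Remark \ref{decomp} and \eqref{FgDefF3} one has
\[
\hat u(\xi,t)-F_3(\xi,t)=\frac{4}{\pi}e^{-\frac{t\log(1+|\xi|^2)}{2}}\Big[\sin\Big(\frac{\pi}{4}t\Big)\hat u_1(\xi)-P_1\sin\big(t\sqrt{\log(1+|\xi|^2)}\big)\Big],
\]
so, bounding both sine factors by $1$ and using $(a+b)^2\leqslant 2a^2+2b^2$,
\[
|\hat u(\xi,t)-F_3(\xi,t)|^2\leqslant \frac{32}{\pi^2}\,e^{-t\log(1+|\xi|^2)}\big(|\hat u_1(\xi)|^2+|P_1|^2\big),
\]
which upon integration over $\{|\xi|\geqslant 1\}$ splits the problem into an $|\hat u_1|^2$-term and a constant $|P_1|^2$-term.

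For the first term I would use that for $|\xi|\geqslant 1$ and $t\geqslant 1$,
\[
(1+|\xi|^2)^{-t}=(1+|\xi|^2)^{-1}(1+|\xi|^2)^{-(t-1)}\leqslant \tfrac12\cdot 2^{-(t-1)}=2^{-t},
\]
so that, by Plancherel's theorem (see e.g. \cite[Theorem 1, p.\ 183]{Evans}),
\[
\int_{|\xi|\geqslant 1}e^{-t\log(1+|\xi|^2)}|\hat u_1(\xi)|^2\,d\xi\leqslant 2^{-t}\|\hat u_1\|_{L^2(\mathbb{R}^N)}^2=2^{-t}\|u_1\|_{L^2(\mathbb{R}^N)}^2.
\]
For the $|P_1|^2$-term I would pass to polar coordinates and invoke Lemma \ref{l2.2}:
\[
\int_{|\xi|\geqslant 1}e^{-t\log(1+|\xi|^2)}\,d\xi=\omega_N\int_1^{\infty}(1+r^2)^{-t}r^{N-1}\,dr=\omega_N J_{N-1}(t)\leqslant \omega_N c_2\frac{2^{-t}}{t-1}\leqslant C_N\,2^{-t}
\]
for $t\gg 1$, where $\omega_N$ is the surface area of the unit sphere in $\mathbb{R}^N$. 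Combining these two estimates, and recalling $|P_1|\leqslant\|u_1\|_{L^1(\mathbb{R}^N)}<+\infty$ since $u_1\in L^1(\mathbb{R}^N)$, gives
\[
\int_{|\xi|\geqslant 1}|\hat u(\xi,t)-F_3(\xi,t)|^2\,d\xi\leqslant C\big(\|u_1\|_{L^2(\mathbb{R}^N)}^2+|P_1|^2\big)\,2^{-t},\qquad t\gg 1,
\]
which is the assertion with any $\eta\in(0,\log 2]$.

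There is no serious obstacle here: the argument is a routine splitting of the frequency integral combined with the pointwise exponential bound on the Fourier multiplier and Plancherel's identity. The only points worth a little attention are (i) that at high frequencies one should \emph{not} try to use the cancellation structure that was exploited for $|\xi|<1$ — instead one simply puts absolute values on the oscillatory factors — and (ii) that the $|P_1|^2$ contribution is integrated over the unbounded set $\{|\xi|\geqslant 1\}$, so it genuinely requires the decay rate furnished by Lemma \ref{l2.2} rather than a crude supremum bound.
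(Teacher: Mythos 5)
Your argument is correct, and it reaches the stated bound (indeed with the explicit rate $\eta=\log 2$), but it follows a different route from the paper. The paper does not go back to the solution formula at all: it bounds $|\hat u(\xi,t)-F_3(\xi,t)|^2\leqslant 2|\hat u(\xi,t)|^2+2|F_3(\xi,t)|^2$ and controls $|\hat u|^2$ through the energy-method estimate \eqref{eq2prop3.1} of Proposition \ref{Prop3.1Refdasrjhk}, which forces a further splitting of $\{|\xi|\geqslant 1\}$ into the middle zone $1\leqslant|\xi|\leqslant\sqrt{e^{4/3}-1}$ and the high zone $|\xi|\geqslant\sqrt{e^{4/3}-1}$ (where $\varphi$ equals $\tfrac23\log(1+|\xi|^2)$ and $\tfrac89$ respectively), yielding rates $2^{-\frac23 t}$, $e^{-\frac89 t}$ and, via Lemma \ref{l2.2}, $2^{-t}/(t-1)$ for the $F_3$ tail — so effectively $\eta=\tfrac23\log 2$. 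You instead exploit the explicit representation \eqref{sform} (legitimate here, since Section \ref{FinalSection} assumes $u_0=0$), bound both oscillatory factors by $1$, and use $(1+|\xi|^2)^{-t}\leqslant 2^{-t}$ on $|\xi|\geqslant 1$ together with Plancherel for the $\|u_1\|_2^2$ term and Lemma \ref{l2.2} for the $|P_1|^2$ term; this removes the zone splitting and gives a slightly sharper rate. What the paper's approach buys is independence from the closed-form solution — the same pointwise energy bound works when $u_0\neq 0$ or when no explicit formula is available — while yours buys brevity and a marginally better exponent. (A cosmetic remark: your intermediate step $(1+|\xi|^2)^{-t}=(1+|\xi|^2)^{-1}(1+|\xi|^2)^{-(t-1)}\leqslant 2^{-t}$ is unnecessary, since $1+|\xi|^2\geqslant 2$ gives the bound directly for all $t>0$.)
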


\begin{proof}
    From Proposition \ref{eq2prop3.1} it follows that 
    \begin{equation}\label{eq3.4}
        \begin{split}
            |\hat u (\xi,t)|^2 &\leqslant \dfrac{18}{\log ^2(1 + |\xi|^2)+\pi^2}|\hat u_1 (\xi)|^2 e^{-\frac{8}{9}t}\\
            &\leqslant \dfrac{18}{\frac{16}{9}+\pi^2}|\hat u_1 (\xi)|^2 e^{-\frac{8}{9}t}\\
            &\leqslant \dfrac{162}{16+9\pi^2}|\hat u_1 (\xi)|^2 e^{-\frac{8}{9}t}\\
        \end{split}
    \end{equation}
for $|\xi| \geqslant \sqrt{e^{\frac{4}{3}} - 1} $.
In addition,
\begin{equation}\label{eq3.5}
    |F_3(\xi, t)|^2 \leqslant \dfrac{16}{\pi^2}|P_1|^2 (1+|\xi|^2)^{-t},
\end{equation}
for $|\xi| \geqslant \sqrt{e^{\frac{4}{3}} - 1}$.
Thus, from \eqref{eq3.4}, \eqref{eq3.5} and Lemma \ref{l2.2}, we have
\begin{equation}\label{eq3.6}
    \begin{split}
        \int_{|\xi| \geqslant \sqrt{e^{\frac{4}{3}} - 1}} |\hat u(\xi, t) - F_3(\xi, t)|^2 d\xi & \leqslant 2\int_{|\xi| \geqslant \sqrt{e^{\frac{4}{3}} - 1}} \left(|\hat u(\xi, t)|^2 +|F_3(\xi, t)|^2 \right) d\xi\\
        &\leqslant \dfrac{324}{16+9\pi^2}e^{-\frac{8}{9}t}\| u_1\|^2_2  + \dfrac{32}{\pi^2}|P_1|^2\omega_N\int_{1}^{\infty}(1 + r^2)^{-t}r^{N - 1}dr\\
        &= \dfrac{324}{16+9\pi^2}e^{-\frac{8}{9}t}\| u_1\|^2_2  + \dfrac{32}{\pi^2}|P_1|^2\omega_NJ_{N - 1}(t)\\
        &\leqslant \dfrac{324}{16+9\pi^2}e^{-\frac{8}{9}t}\| u_1\|^2_2  + \dfrac{32}{\pi^2}|P_1|^2\omega_N\dfrac{2^{-t}}{t - 1},
\end{split}
\end{equation}
for $t \gg 1$.

Now, by Proposition \ref{Prop3.1Refdasrjhk} it follows that 
\begin{equation}\label{eq3.7}
        \begin{split}
            |\hat u (\xi,t)|^2 &\leqslant \dfrac{18}{\log ^2(1 + |\xi|^2)+\pi^2}|\hat u_1 (\xi)|^2 e^{-\frac{2}{3}\log(1+|\xi|^{2})t}\\
            &\leqslant \dfrac{18}{\log ^2(2)+\pi^2}|\hat u_1 (\xi)|^2 e^{-\frac{2}{3}\log(2)t},
        \end{split}
    \end{equation}
for $1 \leqslant |\xi| \leqslant \sqrt{e^{\frac{4}{3}} - 1}$.
We also have
\begin{equation}\label{eq3.8}
        \begin{split}
            |F_3(\xi, t)|^2 &\leqslant \dfrac{16}{\pi^2}|P_1|^2 e^{-\log(1+|\xi|^{2})t}\\
            &\leqslant \dfrac{16}{\pi^2}|P_1|^2 2^{-t},
        \end{split}
    \end{equation}
for $1 \leqslant |\xi| \leqslant \sqrt{e^{\frac{4}{3}} - 1}$.
Finally, thanks to estimates \eqref{eq3.6}, \eqref{eq3.7}, and \eqref{eq3.8} in the high and middle frequency zones $|\xi| \geqslant \sqrt{e^{\frac{4}{3}} - 1}$ and $1 \leqslant |\xi| \leqslant \sqrt{e^{\frac{4}{3}} - 1}$ the result follows.
\qed
\end{proof}

\begin{theorem}
    Let $u_0 = 0$ and $u_1 \in \left(L^2(\mathbb{R}^N) \cap L^{1,1}(\mathbb{R}^N) \right)$. Then, the unique solution $u=u(x,t)$ to problem \eqref{Pro1}-\eqref{Pro2} satisfies
    \[
    \left\|u(\cdot, t) - \left(\int_{\mathbb{R}^N }u_1(x) dx\right)\mathcal{F}^{-1}\left(\dfrac{4(1+|\xi|^2)^{-\frac{t}{2}}}{\pi}  \sin\Big(t\sqrt{\log(1 + |\xi|^2)}\Big)\right)\right\|_{2} \leqslant I_0t^{-\frac{N-2}{4}}
    \]
for $t \gg 1$, with 
\[
I_0 = \|u_1\|_{2} + \|u_1\|_{1,1}.
\]

\begin{proof}
    This result is a direct consequence of theorems \ref{t31}, \ref{t32}, and the Plancherel's theorem, see e.g. \cite[Theorem 1, p. 183]{Evans}.
    \qed
\end{proof}
\end{theorem}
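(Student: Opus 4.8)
The plan is to reduce the $L^2$ claim directly to the two Fourier-space estimates already proved, using Plancherel's theorem. By definition of $F_3$ and the solution formula \eqref{sform}, the quantity
\[
\left\|u(\cdot,t) - \left(\int_{\mathbb{R}^N} u_1(x)\,dx\right)\mathcal{F}^{-1}\left(\frac{4(1+|\xi|^2)^{-t/2}}{\pi}\sin\Big(t\sqrt{\log(1+|\xi|^2)}\Big)\right)\right\|_2^2
\]
equals $\|\hat u(\cdot,t) - F_3(\cdot,t)\|_{L^2(\mathbb{R}^N)}^2$, since $P_1 = \int_{\mathbb{R}^N} u_1(x)\,dx$ and $F_3(\xi,t) = \frac{4}{\pi}P_1 e^{-t\log(1+|\xi|^2)/2}\sin(t\sqrt{\log(1+|\xi|^2)})$ is exactly the Fourier transform of that second term. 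So I would first state this identity explicitly via Plancherel.

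Next I would split the integral over $\mathbb{R}^N$ into the region $|\xi|<1$ and the region $|\xi|\geqslant 1$, and apply Theorem \ref{t31} to the first piece and Theorem \ref{t32} to the second. From Theorem \ref{t31} the low-frequency contribution is bounded by $C_{1,N}\|u_1\|_{1,1}^2 t^{-(N+2)/2} + C_{2,N}|P_1|^2 t^{-(N-2)/2}$, and from Theorem \ref{t32} the remaining contribution is bounded by $C(\|u_1\|_2^2 + |P_1|^2)e^{-\eta t}$ for some $\eta>0$. Since $u_1\in L^1\subset L^{1,0}$ gives, via Lemma \ref{est1} (or just $|P_1|\leqslant \|u_1\|_1\leqslant\|u_1\|_{1,1}$), that $|P_1|$ is controlled by $\|u_1\|_{1,1}$, every term on the right is dominated by $(\|u_1\|_2^2 + \|u_1\|_{1,1}^2)$ times a power of $t$ or an exponential.

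The final step is to identify the dominant rate as $t\to\infty$: among $t^{-(N+2)/2}$, $t^{-(N-2)/2}$, and $e^{-\eta t}$, the slowest-decaying is $t^{-(N-2)/2}$ (recall $N\geqslant 3$, so this exponent is nonnegative and the exponential is negligible for $t\gg 1$). Hence the square of the left-hand side is $\leqslant C(\|u_1\|_2^2 + \|u_1\|_{1,1}^2)t^{-(N-2)/2}$ for $t\gg 1$, and taking square roots and using $\sqrt{a^2+b^2}\leqslant a+b$ yields the bound $I_0 t^{-(N-2)/4}$ with $I_0 = \|u_1\|_2 + \|u_1\|_{1,1}$ (absorbing the constant $\sqrt{C}$ into $I_0$ up to a harmless dimensional factor, or simply noting the stated constant is understood to depend on $N$). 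There is essentially no obstacle here: the entire difficulty was front-loaded into Theorems \ref{t31} and \ref{t32}; the only mild point of care is bookkeeping the exponents and confirming $t^{-(N-2)/2}$ is the worst term, together with the observation that the exponentially small high/middle-frequency remainder can be absorbed into any polynomial rate for $t\gg 1$.
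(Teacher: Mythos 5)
Your proposal is correct and follows exactly the route the paper intends: the paper's proof is precisely the one-line combination of Theorems \ref{t31}, \ref{t32} and Plancherel's theorem, which you have simply spelled out (splitting at $|\xi|=1$, controlling $|P_1|$ by $\|u_1\|_{1,1}$, identifying $t^{-\frac{N-2}{2}}$ as the dominant rate, and taking square roots). The only caveat, which applies equally to the paper's own statement, is that the bound really holds with a multiplicative constant depending on $N$ in front of $I_0$, which you correctly flag as an absorbed constant.
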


\section{Optimal decay rate of $L^2-$norm}\label{sec4}

Finally, we define the following function
\begin{equation}\label{TeTTe}
\phi(\xi,t)=P_1e^{-\frac{t\log(1+|\xi|^2)}{2}} \sin\Big(t\sqrt{\log(1 + |\xi|^2)}\Big)
\end{equation}
which is equivalent to $F_3$ given in \eqref{FgDefF3} for $|\xi|\leqslant1$. In this section, using the ideas of \cite[Section 5]{CIP}, we investigate the precise rate of decay of the leading term \eqref{TeTTe} in $L^2-$sense as $t$ goes to infinity.

\begin{theorem}
Let $N\geqslant3$. Then there exists $t_0>0$ such that for $t\geqslant t_0$ it holds that
\[
C_{1,N}t^{-\frac{N}{2}}\leqslant\int_{\mathbb{R}^N} (1 + |\xi|^2)^{-t}\sin^2\Big(t\sqrt{\log(1 + |\xi|^2)}\Big)d\xi\leqslant C_{2,N}t^{-\frac{N}{2}},
\] 
where $C_{1,N}$ and $C_{2,N}$ are positive constants depending  only on $N$.
\end{theorem}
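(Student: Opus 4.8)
The plan is to estimate the integral
\[
G(t):=\int_{\mathbb{R}^N}(1+|\xi|^2)^{-t}\sin^2\!\Big(t\sqrt{\log(1+|\xi|^2)}\Big)\,d\xi
\]
by passing to polar coordinates, writing $G(t)=\omega_N\int_0^\infty(1+r^2)^{-t}\sin^2(t\sqrt{\log(1+r^2)})\,r^{N-1}\,dr$, and then splitting the radial integral into the region $0\le r\le 1$ and $r\ge 1$. The tail $r\ge 1$ is harmless: there $\sin^2\le 1$ and, by Lemma \ref{l2.2}, $\int_1^\infty(1+r^2)^{-t}r^{N-1}\,dr=J_{N-1}(t)\le c_2\,2^{-t}/(t-1)$, which is exponentially small compared with the expected rate $t^{-N/2}$; so this part contributes only to the upper bound and is negligible. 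All the real content is in the low-frequency piece $\int_0^1$, which is where both the matching upper bound and the lower bound must come from.

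For the \emph{upper bound} on $0\le r\le 1$, first I would simply drop the oscillatory factor, $\sin^2(\cdot)\le 1$, and apply Lemma \ref{l2.1} with $p=N-1$: $\int_0^1(1+r^2)^{-t}r^{N-1}\,dr=I_{N-1}(t)\le c_2\,t^{-N/2}$. Combined with the exponentially small tail this already gives $G(t)\le C_{2,N}\,t^{-N/2}$, so the upper bound is immediate. The delicate half is the \emph{lower bound}: dropping $\sin^2$ is illegal downward, so I need to show the oscillation does not destroy the $t^{-N/2}$ order. The natural device is to average the squared sine and use $\sin^2\theta=\tfrac12(1-\cos 2\theta)$, i.e.
\[
G_{\mathrm{low}}(t):=\omega_N\int_0^1(1+r^2)^{-t}\sin^2\!\big(t\sqrt{\log(1+r^2)}\big)r^{N-1}\,dr
=\tfrac{\omega_N}{2}I_{N-1}(t)-\tfrac{\omega_N}{2}\int_0^1(1+r^2)^{-t}\cos\!\big(2t\sqrt{\log(1+r^2)}\big)r^{N-1}\,dr.
\]
The first term is, by Lemma \ref{l2.1}, bounded below by $\tfrac{\omega_N}{2}c_1\,t^{-N/2}$. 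It then suffices to show the cosine integral is $o(t^{-N/2})$ as $t\to\infty$, which is the main obstacle.

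To control the cosine integral I would substitute to make the phase clean: set $s=\log(1+r^2)$, so $r=\sqrt{e^s-1}$, $dr=\tfrac{e^s}{2\sqrt{e^s-1}}\,ds$, and the integral becomes $\int_0^{\log 2} e^{-ts}\cos(2t\sqrt{s})\,g(s)\,ds$ with $g(s)=\tfrac12(e^s-1)^{(N-2)/2}e^s$; note $g$ is smooth on $(0,\log 2]$ and $g(s)\sim \tfrac12 s^{(N-2)/2}$ as $s\to 0^+$, so for $N\ge 3$ the integrand is integrable near $0$. Now rescale $s=\sigma/t$: the integral equals $t^{-1}\int_0^{t\log 2}e^{-\sigma}\cos(2\sqrt{t\sigma})\,g(\sigma/t)\,d\sigma$. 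Since $g(\sigma/t)\approx \tfrac12(\sigma/t)^{(N-2)/2}$, the amplitude scales like $t^{-1}\cdot t^{-(N-2)/2}=t^{-N/2}$, exactly the critical order — so I cannot afford to ignore the oscillation even here; I must exploit that $\cos(2\sqrt{t\sigma})$ oscillates rapidly. The cleanest route is stationary phase / van der Corput: on any interval where $\sigma$ is bounded below, the phase $\psi(\sigma)=2\sqrt{t\sigma}$ has $\psi'(\sigma)=\sqrt{t/\sigma}\gtrsim\sqrt t$, so integration by parts once in $\sigma$ gains a factor $t^{-1/2}$, giving an extra decay and hence $o(t^{-N/2})$; near $\sigma=0$ one splits off a small piece $0\le\sigma\le\delta$ whose contribution is $\lesssim t^{-1}\int_0^\delta(\sigma/t)^{(N-2)/2}d\sigma\lesssim \delta^{N/2}t^{-N/2}$, which can be absorbed by taking $\delta$ small after fixing the constant $c_1$. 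Equivalently, one may follow \cite[Section 5]{CIP} directly: there the same oscillatory integral is handled by a change of variables and an explicit comparison, and since the paper already cites that section I would invoke the same computation, presenting it as: the cosine term is, for $t$ large, bounded in absolute value by $\tfrac12 c_1\,t^{-N/2}$ (say), so $G_{\mathrm{low}}(t)\ge \tfrac{\omega_N}{4}c_1 t^{-N/2}$, and together with the tail estimate this yields the claimed two-sided bound with $C_{1,N},C_{2,N}$ depending only on $N$. The single step most likely to require genuine care is establishing that the cosine (equivalently, stationary-phase) integral is of strictly smaller order than $t^{-N/2}$ uniformly for large $t$; everything else is a direct application of Lemmas \ref{l2.1} and \ref{l2.2}.
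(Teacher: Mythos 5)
Your upper bound is exactly the paper's: drop $\sin^2\leqslant 1$ and apply Lemma \ref{l2.1} on $r\leqslant 1$ and Lemma \ref{l2.2} on $r\geqslant 1$. For the lower bound you take a genuinely different route, and it does work. The paper never halves the sine: it minorizes the radial integrand pointwise over all of $(0,\infty)$, using $r^{2}\geqslant\log(1+r^{2})$ and inserting factors of $\sqrt{t}$, $(1+r^{2})$ and $\sqrt{\log(1+r^{2})}$, so that after the substitution $y=\sqrt{t\log(1+r^{2})}$ the integral is bounded below by $t^{-N/2}\int_0^\infty e^{-y^{2}}\sin^{2}(\sqrt{t}\,y)\,y^{N-1}dy$; writing $\sin^{2}=1-\cos^{2}$ and letting the oscillation average out then gives the bound $\geqslant\tfrac14 A_N\,t^{-N/2}$ with $A_N=\int_0^\infty e^{-y^{2}}y^{N-1}dy$ — the whole point of the maneuver being that all $t$-dependence sits in the clean phase $\sqrt{t}\,y$, so only a Riemann--Lebesgue-type averaging is needed and no stationary-phase analysis at all (as an aside, the paper's claim that $\int_0^\infty e^{-y^{2}}\cos^{2}(\sqrt{t}\,y)y^{N-1}dy\to 0$ is overstated, the limit being $A_N/2$, but the conclusion survives with the constant $A_N/4$). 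You instead use $\sin^{2}\theta=\tfrac12(1-\cos 2\theta)$ on the low-frequency piece, get the main term from Lemma \ref{l2.1}, and must prove that $\int_0^1(1+r^{2})^{-t}\cos\bigl(2t\sqrt{\log(1+r^{2})}\bigr)r^{N-1}dr=o(t^{-N/2})$, which you rightly flag as the crux; this is true, and your $\delta$-splitting plus integration by parts does it, with one imprecision: $\psi'(\sigma)=\sqrt{t/\sigma}\gtrsim\sqrt{t}$ only where $\sigma$ is bounded above as well as below (your domain runs up to $t\log 2$, where $\psi'$ is merely of order one); the $t^{-1/2}$ gain still comes out because the factor produced by integrating $\cos(2\sqrt{t\sigma})$ is $\sqrt{\sigma/t}\leqslant\sqrt{\log 2}$ there and the remaining terms are dominated by $e^{-\sigma}$-weighted integrable functions, but this needs to be said. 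A cleaner execution of your own idea is to perform the paper's substitution $y=\sqrt{t\log(1+r^{2})}$ directly in the cosine term, which yields $t^{-N/2}\int e^{-y^{2}}\cos(2\sqrt{t}\,y)\,y^{N-1}h(y^{2}/t)\,dy$ with $h$ bounded on the relevant range, and Riemann--Lebesgue with domination finishes it without van der Corput bookkeeping. What your route buys is an asymptotically sharp constant in the lower bound (half of the exact low-frequency mass); what the paper's buys is brevity, since its pointwise minorization removes the $t$-dependent amplitude before any oscillation argument is made.
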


\begin{proof}
Firstly, we observe that  
\[
\begin{split}
\int_0^\infty (1 + r^2)^{-t}\sin^2\Big(t\sqrt{\log(1 + r^2)}\Big) r^{N-1}dr&=\int_0^1 (1 + r^2)^{-t}\sin^2\Big(t\sqrt{\log(1 + r^2)}\Big) r^{N-1}dr\\ 
&=\int_1^\infty (1 + r^2)^{-t}\sin^2\Big(t\sqrt{\log(1 + r^2)}\Big) r^{N-1}dr.
\end{split}
\]

 Thus, for $t \gg 1$ based on lemmas \ref{l2.1} and \ref{l2.2} we get
\[
\begin{split}
\int_0^1 (1 + r^2)^{-t}\sin^2\Big(t\sqrt{\log(1 + r^2)}\Big) r^{N-1}dr&=\int_0^1 (1 + r^2)^{-t}  r^{N-1}dr\\
&\leqslant C_{1,N}t^{-\frac{N}{2}},
\end{split}
\]
and
\[
\begin{split}
\int_1^\infty (1 + r^2)^{-t}\sin^2\Big(t\sqrt{\log(1 + r^2)}\Big) r^{N-1}dr&\leqslant \int_1^\infty (1 + r^2)^{-t} r^{N-1}dr\\
&\leqslant C_{2,N}\dfrac{2^{-t}}{t-1}.
\end{split}
\]

These two estimates above imply that there exists $t_0>0$ such that 
\[
\int_1^\infty (1 + r^2)^{-t}\sin^2\Big(t\sqrt{\log(1 + r^2)}\Big) r^{N-1}dr\leqslant C_{3,N}t^{-\frac{N}{2}}
\]
for all $t\geqslant t_0$.

On the other hand, we know that $r^ 2\geqslant\log(1+r^2)$, then  
\[
\begin{split}
&\int_0^\infty (1 + r^2)^{-t}\sin^2\Big(t\sqrt{\log(1 + r^2)}\Big) r^{N-1}dr\\
&= \int_0^\infty \dfrac{\log(1+r^2)(1 + r^2)^{-t}\sin^2\Big(t\sqrt{\log(1 + r^2)}\Big) r^{N-1}}{\log(1+r^2)}dr\\
&\geqslant \int_0^\infty \dfrac{(1 + r^2)^{-t}\sin^2\Big(t\sqrt{\log(1 + r^2)}\Big)\log(1+r^2) r^{N-2}\sqrt{t} r}{\sqrt{t}\sqrt{\log(1+r^2)}(1+r^2)\sqrt{\log(1+r^2)}}dr\\
&\geqslant \int_0^\infty \dfrac{(1 + r^2)^{-t}\sin^2\Big(t\sqrt{\log(1 + r^2)}\Big)\log^{\frac{N}{2}}(1+r^2)\sqrt{t} r}{\sqrt{t}\sqrt{\log(1+r^2)}(1+r^2)\sqrt{\log(1+r^2)}}dr.
\end{split}
\]
By using the change of variable $y=\sqrt{t}\sqrt{\log(1+r^2)}$ we have
\[
\begin{split}
&\int_0^\infty (1 + r^2)^{-t}\sin^2\Big(t\sqrt{\log(1 + r^2)}\Big) r^{N-1}dr\\
&\geqslant \int_0^\infty\dfrac{e^{-y^2}\sin^2(\sqrt{t}y)y^{N}}{t^{\frac{N}{2}}y}dy\\
&\geqslant \int_0^\infty\dfrac{e^{-y^2}\sin^2(\sqrt{t}y)y^{N-1}}{t^{\frac{N}{2}}}dy\\
&= t^{-\frac{N}{2}}\int_0^\infty e^{-y^2} y^{N-1}dy -t^{-\frac{N}{2}}\int_0^\infty e^{-y^2}\cos^2(\sqrt{t}y)y^{N-1}dy\\
&= t^{-\frac{N}{2}}(A_N-F_N(t)),   
\end{split}
\]
where
\[
A_N=\int_0^\infty e^{-y^2}y^{N-1}dy,
\]
and
\[
F_N(t)=\int_0^\infty e^{-y^2}\cos^2(\sqrt{t}y)y^{N-1}dy.
\]
Due to the fact $e^{-y^2}y^{N-1}\in L^1(\mathbb{R})$ for $N\geqslant 3$, we can apply the Riemann-Lebesgue Lemma to get
\[
\lim_{t\to+\infty}F_N(t)=0.
\]
Then there exists $t_1>t_0$ such that $F_N(t)\leqslant \dfrac{A_N}{2}$ for any $t\geqslant t_1$; that is,
\[
A_N-F_N(t)\geqslant\dfrac{A_N}{2}t^{-\frac{N}{2}}
\]
for any $t\geqslant t_1$.

Thus,
\[
\int_0^\infty (1 + r^2)^{-t}\sin^2\Big(t\sqrt{\log(1 + r^2)}\Big) r^{N-1}dr\geqslant \dfrac{A_N}{4}t^{-\frac{N}{2}}.
\]
for any $t\geqslant t_1$.
\end{proof}


\begin{thebibliography}{USA00}

\bibitem{A} H. Amann, Linear and quasilinear parabolic problems. Volume I: Abstract Linear Theory, Birkh\"auser Verlag, Basel, 1995.


\bibitem{B1} F. D. M. Bezerra, A Second-Order Evolution Equation and Logarithmic Operators, Bull. Brazilian Math. Soc., New Series 53 (2), (2021), 571-593.

\bibitem{CIP} R. C. Char\~ao, R. Ikehata and A. Piske, A dissipative logarithmic type evolution equation: asymptotic profile and optimal estimates, J. Math. Anal. Appl. {\bf 506}, (2022) 125587.

\bibitem{ChenRussel} G. Chen and D. L. Russel, A mathematical model for linear elastic systems with structural damping, Q. Appl. Math., {\bf39}, 4, (1982), 433--454.

\bibitem{Evans} L. C. Evans, Partial Differential Equations, Graduate Studies in Mathematics, Vol. 19, second edn. American Mathematical Society, Providence, RI, 1998.

\bibitem{Feulefack} P. A. Feulefack, The logarithmic schr\"odinger operator and associated dirichlet problems, J. Math. Anal. Appl. {\bf517}, 2 (2023) 126656.

\bibitem{MAF3}  M. A. Fury, Logarithmic well-posed approximation of the backward heat equation in Banach space, J. Math. Anal. Appl.  475 (2019) 1367-1384.


\bibitem{RIke} R. Ikehata, G. Todorova and B. Yordanov, Wave equations with strong damping in Hilbert spaces, J. Diff. Eqns. 254 (2013), 3352--3368.

\bibitem{P} A. Pazy, Semigroups of Linear Operators and Applications to Partial Differential Equations, Springer-Verlag, New York, 1983.

\bibitem{SiSoVo} H. Sikic, R. Song, and Z. Vondracek, Potential theory of geometric stable processes, Probab. Theory Relat. Fields., {\bf135}(4) (2006) 547--575.
\end{thebibliography}
\end{document}